\tikzset{->-/.style={decoration={
  markings,
  mark=at position .5 with {\arrow{>}}},postaction={decorate}}}
\newtheorem{theorem}{Theorem}[section]
\newtheorem{lemma}[theorem]{Lemma}
\newtheorem{example}[theorem]{Example}
\newtheorem{conjecture}[theorem]{Conjecture}
\newtheorem{proposition}[theorem]{Proposition}
\def\Q{\mathbb{Q}}
\newcommand{\inv}{n_{inv}}
\title{Combinatorics of Tableau Inversions}
\author{Jonathan E. Beagley \qquad Paul Drube\\
\small Department of Mathematics and Statistics\\
\small Valparaiso University\\
\small \tt \{jon.beagley, paul.drube\}@valpo.edu 
}
\begin{document}
\maketitle

\begin{abstract}
A tableau inversion is a pair of entries in row-standard tableau $T$ that lie in the same column of $T$ yet lack the appropriate relative ordering to make $T$ column-standard.  An $i$-inverted Young tableau is a row-standard tableau along with a precisely $i$ inversion pairs.  Tableau inversions were originally introduced by Fresse to calculate the Betti numbers of Springer fibers in Type A, with the number of $i$-inverted tableaux that standardize to a fixed standard Young tableau corresponding to a specific Betti number of the associated fiber.  In this paper we approach the topic of tableau inversions from a completely combinatorial perspective.  We develop formulas enumerating the number of $i$-inverted Young tableaux for a variety of tableaux shapes, not restricting ourselves to inverted tableau that standardize a specific standard Young tableau, and construct bijections between $i$-inverted Young tableaux of a certain shape with $j$-inverted Young tableaux of different shapes.  Finally, we share some the results of a computer program developed to calculate tableaux inversions.

\bigskip\noindent \textbf{Keywords:} Young tableaux, inversions of Young tableaux
\end{abstract}

\section{Introduction}
\label{sec: intro}

Let $\lambda = (\lambda_1,\lambda_2,\hdots,\lambda_m)$ be a non-increasing sequence of positive integers that partition $N$.  A \textbf{Young diagram} $Y$ of shape $\lambda$ is a left-justified array of $N = \lambda_1 + \hdots + \lambda_m$ boxes such that there are $\lambda_i$ boxes in the i\textsuperscript{th} row of $Y$.  A standard filling of $Y$ is a bijective assignment of the integers $1,2,\hdots,N$ to the boxes of $Y$, producing what is known as a tableau of shape $\lambda$.  The resulting tableau is said to be \textbf{row-standard} if its entries are increasing from left-to-right along each row, and \textbf{column-standard} if its entries are increasing from top-to-bottom in each column.  A \textbf{standard Young tableau} of shape $\lambda$ is a tableau of shape $\lambda$ that is both row-standard and column-standard.  For more basic information on Young Tableaux, see \cite{Fulton}.

One established result about standard Young tableaux that we will make repeated use of is the Hook-Length formula.  Let $Y$ be a Young diagram of shape $\lambda$.  To each box in $Y$ one may assign a \textbf{hook-length} that equals one plus the number of boxes directly below or directly to the right of the given box (i.e.- the number of boxes in the ``hook"-shaped sub-tableaux whose corner lies at the chosen box).  As stated below, the Hook-Length formula uses these hook-lengths to determine the number of standard Young tableaux of shape $\lambda$.  For more discussion of this result, see Chapter 7 of \cite{Stanley2}.

\begin{theorem}[The Hook-Length Formula]
\label{thm: hook length formula}
Let $\lambda = (\lambda_1,\lambda_2,\hdots,\lambda_m)$ be a non-increasing partition of $N$, and let $h_{ij}$ denote the hook-length of the $(i,j)$-entry in a Young diagram of shape $\lambda$.  Then the number of standard Young tableaux of shape $\lambda$ is:

\begin{center}
$\displaystyle{\vert S(\lambda) \vert = \frac{N!}{\prod h_{ij}}}$
\end{center}
\end{theorem}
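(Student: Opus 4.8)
The plan is to prove the Hook-Length Formula by induction on $N$, using the probabilistic \emph{hook walk} argument of Greene, Nijenhuis, and Wilf. Write $f(\lambda) := \vert S(\lambda)\vert$ for the number of standard Young tableaux of shape $\lambda$, and write $F(\lambda) := N!/\prod h_{ij}$ for the conjectured count. In any standard Young tableau of shape $\lambda \vdash N$ the entry $N$ must occupy a \emph{corner} box (one with no box below it and no box to its right), and deleting that box gives a bijection exhibiting the recursion
\[
f(\lambda) = \sum_{\mu} f(\mu),
\]
where $\mu$ ranges over all partitions obtained from $\lambda$ by removing a single corner box, with base case $f((1)) = 1$. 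It therefore suffices to show that $F$ obeys the same recursion and base case; the base case $F((1)) = 1!/1 = 1$ is immediate.

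To verify $F(\lambda) = \sum_\mu F(\mu)$, I would divide through by $F(\lambda)$, so that the claim becomes $\sum_\mu p_\mu = 1$ with $p_\mu := F(\mu)/F(\lambda)$. Removing a corner box at position $(a,b)$ changes only those hook-lengths lying in row $a$ or column $b$ of $Y$ (each such hook decreases by exactly one), so since $(N-1)!/N! = \tfrac1N$ the ratio collapses to
\[
p_\mu = \frac{1}{N}\prod \frac{h}{h-1},
\]
the product running over the boxes strictly above $(a,b)$ in column $b$ and strictly to the left of $(a,b)$ in row $a$. The heart of the argument is to read this quantity off a random process: pick a box of $Y$ uniformly at random, then repeatedly jump from the current box to a box chosen uniformly from the remainder of its hook (strictly below or strictly to the right, same column or row), stopping upon reaching a corner. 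I would prove, by induction on the size of the smallest rectangle containing both the starting box and the terminal corner, that the probability the walk ends at corner $(a,b)$ is exactly the product expression displayed above for $p_\mu$. Since the walk always terminates at some corner, summing over corners gives $\sum_\mu p_\mu = 1$, which closes the induction on $N$.

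The main obstacle is precisely this hook-walk probability lemma: tracking the conditional transition probabilities along the walk and collapsing the resulting nested sums into the clean product $\tfrac1N\prod h/(h-1)$. Success hinges on the right induction variable (the spanning rectangle) together with a partial-fractions expansion of a product $1/\big((x_1-1)\cdots(x_k-1)\big)$ against the arm- and leg-length data of the corner; once that identity is in hand, everything else is routine bijective and arithmetic manipulation.

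As an alternative one could instead apply the Lindström--Gessel--Viennot lemma to write $f(\lambda)$ as $N!$ times the Aitken determinant $\det\big(1/(\lambda_i - i + j)!\big)$ and then evaluate that determinant, via column operations, as $1/\prod h_{ij}$. I find the probabilistic route more self-contained, so that is the one I would present.
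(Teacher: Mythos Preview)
Your proof sketch is the standard Greene--Nijenhuis--Wilf hook walk argument and is correct in outline; the probabilistic lemma you flag as the main obstacle is indeed the crux, and your plan for handling it (induction on the spanning rectangle, partial-fractions identity) is the usual route and would go through.

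However, there is nothing to compare against: the paper does not prove this theorem. Theorem~\ref{thm: hook length formula} is quoted as established background, with a pointer to Stanley~\cite{Stanley2} for discussion, and is then used as a tool elsewhere (e.g.\ to compute $\vert S_0(\widetilde{\lambda})\vert$ after Theorem~\ref{thm: enumeration, rectangular i=1}). So your proposal supplies a proof where the paper simply cites one; any of the standard arguments---GNW hook walk, the Frame--Robinson--Thrall original, or the determinantal route you mention---would be acceptable here, and the paper takes no position among them.
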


Now take a permutation $\sigma \in S_n$.  An inversion of $\sigma$ is a pair of integers $1 \leq i,j \leq n$ such that $i < j$ yet $\sigma(i) > \sigma(j)$.  We write $(i,j)_\sigma$ or simply $(i,j)$ if $i$ and $j$ form an inversion pair of $\sigma$ with $i < j$.  We denote the total number of inversions of $\sigma$ by $\inv(\sigma)$.

In \cite{Fresse}, Fresse generalized this notion of inversion to multi-column tableaux.  Let $\tau$ be a row-standard tableau of shape $\lambda$.  A pair of entries $1 \leq i,j \leq N$ from the same column of $\tau$ are an \textbf{inversion} of $\tau$ if $i < j$ and one of the following holds:

\begin{enumerate}
\item Either $i$ or $j$ lacks a entry to its right and $i$ is below $j$.
\item $i$ is bordered immediately on its right by $i'$, $j$ is bordered immediately on its right by $j'$, and $i' > j'$.
\end{enumerate}

As above, we write $(i,j)_\tau$ or simply $(i,j)$ if $i$ and $j$ constitute an inversion pair of $\tau$ with $i<j$, while we denote the total number of inversion pairs of $\tau$ by $\inv(\tau)$.  Notice that a row-standard tableau $\tau$ is a standard Young tableau if and only if $\inv(\tau) = 0$, and that this definition specializes to the notion of permutation inversion if you represent $\sigma \in S_n$ by the single-column tableau whose entries appear in the order $\sigma(1),\hdots,\sigma(n)$.

Given any row-standard tableau $\tau$ of shape $\lambda$, one may independently reorder each of the columns of $\tau$ to produce a tableau that is column-standard.  As argued in \cite{Fresse}, the resulting tableau is also guaranteed to be row-standard, and hence is a standard Young tableau of shape $\lambda$.  This standard Young tableau is clearly unique, and is referred to as the \textbf{standardization} of $\tau$, denoted $st(\tau)$; we equivalently say that $\tau$ is ``based" on the standard Young tableau $st(\tau)$\footnote{We note that this is a distinct definition from standardization of a semi-standard Young tableau, as found in the literature.}.  As any row-standard tableau may be ``standarized" by removing its inversions, we henceforth refer to the collection of all row-standard tableaux of shape $\lambda$ as \textbf{inverted (standard) Young tableaux} of shape $\lambda$.  We denote the set of all inverted Young tableaux of shape $\lambda$ by $I(\lambda)$.

Figure \ref{fig: basic inversion example} shows an inverted Young tableau of shape $\lambda = (3,3,3)$ alongside its standardization.  Throughout this paper, we will highlight the location of inversions with shaded boxes, although do notice that this convention becomes slightly ambiguous when there are multiple inversions per column.

\begin{figure}[h!]
\centering
\begin{ytableau}
1 & *(gray!50) 2 & *(gray!50) 8 \\
*(gray!50) 4 & *(gray!50) 5 & *(gray!50) 6 \\
*(gray!50) 3 & 7 & 9
\end{ytableau}
\hspace{.03in}
\raisebox{-12pt}{$\rightarrow$}
\hspace{.03in}
\begin{ytableau}
1 & 2 & 6 \\
3 & 5 & 8 \\
4 & 7 & 9
\end{ytableau}
\caption{An inverted tableau $\tau$ with inversions $(3,4),(2,5),(6,8)$ and its standardization $st(\tau)$}
\label{fig: basic inversion example}
\end{figure}

In \cite{Fresse}, Fresse utilizes inverted Young tableaux to determine the Betti numbers of Springer fibers in type A.  In particular, he fixes a specific standard Young tableau $T$ of shape $\lambda$ and considers the corresponding set of inverted tableaux $inv(T) = \lbrace \tau \ \vert \ st(\tau) = T \rbrace$.  He then argues that the component of the Springer variety $F_\lambda$ corresponding to $T$ has $m$\textsuperscript{th} Betti number equaling the number of inverted Young tableaux in $inv(T)$ with precisely $d-m$ inversions, where $d$ is the dimension of the entire Springer variety.  Ranging over all standard Young tableaux $T$ then gives:

\begin{theorem}[Fresse]
\label{thm: Fresse}
Let $\lambda$ be a partition of $N$ and consider the Springer variety $F_\lambda$.  If $dim(F_\lambda)=d$, then the m\textsuperscript{th} Betti number $b_m = dim(H^{2m}(F_\lambda,\Q))$ equals the number of inverted Young tableaux $\tau$ of shape $\lambda$ with $\inv(\tau) = d-m$.
\end{theorem}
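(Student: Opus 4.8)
The statement is due to Fresse \cite{Fresse}; the plan is to recall the geometric input and indicate how the combinatorial reformulation follows. The central fact is that the Springer fiber $F_\lambda$ admits a \emph{paving by affine spaces}. One realizes $F_\lambda$ as the variety of complete flags $0 = V_0 \subset V_1 \subset \cdots \subset V_N = \C^N$ with $u V_i \subseteq V_{i-1}$ for a fixed nilpotent endomorphism $u$ of Jordan type $\lambda$, fixes a Jordan basis for $u$ whose elements are indexed by the boxes of the Young diagram of $\lambda$, and stratifies $F_\lambda$ according to the incidence of a flag with this basis: to a flag one assigns the bijective filling $\tau$ of the diagram obtained by recording, for each $i = 1,\ldots,N$, the box that ``first becomes occupied'' as one passes from $V_{i-1}$ to $V_i$. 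One then checks that the filling $\tau$ so produced is always row-standard, that every row-standard filling arises this way, and that the preimage $C_\tau$ of a fixed $\tau$ is isomorphic to an affine space. Thus the cells of the paving are naturally indexed by the set $I(\lambda)$ of inverted Young tableaux of shape $\lambda$.

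The technical core --- and the step I expect to be the main obstacle --- is to show $\dim_\C C_\tau = d - \inv(\tau)$, where $d = \dim_\C F_\lambda$. One puts explicit affine coordinates on $C_\tau$, parametrizing the freedom in how each $V_i$ sits between $V_{i-1}$ and $V_{i-1} + \langle e \rangle$ for an appropriate basis vector $e$, and runs a bookkeeping argument: the cell $C_{st(\tau)}$ attached to the standardization of $\tau$ has full dimension $d$, and each inversion pair of $\tau$ --- with the two cases in the definition of Section \ref{sec: intro} corresponding exactly to whether the two entries involved have right-neighbors --- imposes one independent linear condition, so that $\inv(\tau)$ is precisely the codimension of $C_\tau$ inside $C_{st(\tau)}$'s closure. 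Setting $\inv(\tau) = 0$ recovers Spaltenstein's theorem that the irreducible components of $F_\lambda$ are indexed by standard Young tableaux of shape $\lambda$, each of dimension $d$, which provides a consistency check on the dimension formula.

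Given the affine paving, the theorem is immediate from general principles: a complex variety paved by affine spaces has cohomology concentrated in even degrees, with $b_m = \dim H^{2m}(F_\lambda,\Q)$ equal to the number of cells of complex dimension $m$. Since those cells are the $C_\tau$ for $\tau \in I(\lambda)$ and $\dim_\C C_\tau = d - \inv(\tau)$, we conclude $b_m = \#\{\tau \in I(\lambda) : d - \inv(\tau) = m\} = \#\{\tau \in I(\lambda) : \inv(\tau) = d - m\}$, as claimed. An alternative, less hands-on route would bypass the explicit paving by invoking the known graded character of the Springer representation (equivalently, the Green and Kostka--Foulkes polynomials) and exhibiting a statistic-preserving bijection between $I(\lambda)$ and the combinatorial data carrying those gradings, matching $\inv$ with (co)charge up to the substitution $q \mapsto q^{-1}$ and the overall shift by $q^{d}$; but the affine-paving argument is the direct one, and is the proof Fresse carries out.
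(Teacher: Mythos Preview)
The paper does not give its own proof of this statement: it is quoted as a result of Fresse \cite{Fresse} and used only to motivate the combinatorics, with the authors explicitly disclaiming any detailed treatment of Springer varieties. Your sketch of the affine-paving argument is a faithful summary of Fresse's approach, so there is nothing in the paper to compare it against; as a write-up it is appropriate, though you should be clear that the dimension formula $\dim_\C C_\tau = d - \inv(\tau)$ is being cited rather than re-derived.
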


For a given shape $\lambda = (\lambda_1,\hdots,\lambda_m)$, let $S_i(\lambda) = S_i(\lambda_1,\hdots,\lambda_m)$ denote the set of row-standard tableaux $\tau$ of shape $\lambda$ with $\inv(\tau) = i$.  Theorem \ref{thm: Fresse} then states that $b_m = \vert S_{d-m}(\lambda) \vert$.  We henceforth refer to elements of $S_i(\lambda)$ as \textbf{i-inverted (standard) Young tableau} of shape $\lambda$.  Note that standard Young tableaux correspond to $S_0(\lambda) = S(\lambda)$, and that the size of the set $S_0(\lambda)$ is explicitly determined by the Hook-Length formula.

This paper won't assume detailed knowledge of Springer varieties.  We present Fresse's results primarily to motivate our study of tableaux inversions.  Our approach is purely combinatorial, and is concerned with developing tractable methods for enumerating elements in the sets $S_i(\lambda)$ for arbitrary $\lambda$ and any $i \geq 0$.

The primary difficulty in adapting Fresse's approach to these goals is that his enumerative results require knowledge of specific standard Young tableau as a starting point.  Even if one can easily produce a generating function that gives the sizes of the sets $S_i(\lambda) \cap inv(T)$ for a fixed standard tableau $T$, there is no workable way for determining all possible generating functions when one ranges over all underlying $T$.  We avoid this difficulty by presenting techniques for directly calculating the sizes of the entire sets $S_i(\lambda)$.







\subsection{Outline of Results}
\label{subsec: results outline}

In this paper we begin with a series of basic results about the number of inverted Young Tableaux.  We give a closed formula (Proposition \ref{thm: total number of inverted tableaux}) for the total number of inverted tableaux for a fixed shape $\lambda$ in terms of its row lengths.  Then we show that, for any shape $\lambda$, there is a maximal inversion number $M_\lambda$ that is determined by sums of triangular numbers, and that there exists a single tableau of shape $\lambda$ realizing that maximal number of inversions (Proposition \ref{thm: maximal inversion number}).  After directly enumerating all one-column and two-row tableaux with a fixed number of inversions (Theorem \ref{thm: two-row tableaux inversions}) , we proceed to our primary combinatorial results.  Our first major result, introduced as Theorem \ref{thm: enumeration, rectangular i=1} (and generalized to non-rectangular tableaux in Theorem \ref{thm: enumeration, general i=1}), relates the number of rectangular tableaux with precisely one inversion to the number of standard Young tableaux of a related ``stair-step" shape:

\begin{theorem}
\label{thm: preview 1}
Let $n,m \geq 1$, and consider the m-row shapes $\lambda = (n,\hdots,n)$, $\widetilde{\lambda} = (n+1,n,\hdots,n,n-1)$.  Then $\vert S_1(\lambda) \vert = \vert S_0(\widetilde{\lambda}) \vert$. 
\end{theorem}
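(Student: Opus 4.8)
The plan is to first pin down precisely what a rectangular tableau with a single inversion looks like, and then to transport that data to a standard tableau of the stair-step shape $\widetilde\lambda$.

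First I would put $S_1(\lambda)$ into a normal form. For a row-standard $\tau$ of rectangular shape $\lambda=(n,\dots,n)$, record for each column $j$ the permutation $\pi_j\in S_m$ giving the rank of each row's entry among the entries of that column. Since $\lambda$ is rectangular, every entry outside the last column has a right neighbour, and a direct check against the definition of inversion gives $\inv(\tau)=\sum_{j=1}^{n-1}d(\pi_j,\pi_{j+1})+d(\pi_n,\mathrm{id})$, where $d(\sigma,\rho)$ is the number of pairs on which $\sigma$ and $\rho$ disagree. As these summands are non-negative integers, $\inv(\tau)=1$ forces a single simple transposition $s_k$ to appear: there are $k\in\{1,\dots,m-1\}$ and $c\in\{1,\dots,n\}$ with $\pi_1=\dots=\pi_c=s_k$ and $\pi_{c+1}=\dots=\pi_n=\mathrm{id}$. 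Translating back, $\tau$ is obtained from $T:=st(\tau)$ by interchanging the entries of rows $k$ and $k+1$ in columns $1,\dots,c$; conversely this operation on a standard $T$ yields an element of $S_1(\lambda)$ exactly when the result is row-standard, i.e. when $c=n$ or $T_{k+1,c}<T_{k,c+1}$. So $S_1(\lambda)$ is in bijection with the set of triples $(T,k,c)$ meeting this condition, and it remains to match such triples with standard tableaux of $\widetilde\lambda$.

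For the two-row case $m=2$ this is clean, and I would present it first as a model. Here $k=1$ is forced and $T_{2,c}<T_{1,c+1}$ says that columns $1,\dots,c$ of $T$ contain exactly $\{1,\dots,2c\}$, whence $T_{2,c}=2c$. Move $T_{2,c}$ up into row $1$ — it lands between $T_{1,c}$ and $T_{1,c+1}$ — and slide the rest of row $2$ one step left; the output is standard of shape $(n+1,n-1)$. In the Dyck-path encoding of a two-row standard tableau, $c$ marks a return of the path to height $0$, and this move just turns the down-step at that return into an up-step, which is visibly a bijection onto lattice words of shape $(n+1,n-1)$ (the inverse recovers $c$ as one more than the position of the path's last visit to height $1$, halved). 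This already proves the case $m=2$, and also identifies the bookkeeping ($c$, and later $k$) that must survive into the output.

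For general $m$ the same idea has to be carried out globally: the extra cell must reach the end of row $1$ and the missing cell the end of row $m$, so for $k>1$ or $k<m-1$ more than the two rows $k,k+1$ are involved. One natural implementation is to run the "lift" as a jeu-de-taquin slide — float the cell coming out of row $k+1$ northward to position $(1,n+1)$ while the cells beneath it close ranks down to $(m,n)$ — with $k$ encoded in the shape of the slide; an alternative is to induct on $N=mn$, sending the largest entry into whichever of the corners $(1,n+1)$, $(m-1,n)$, $(m,n-1)$ of $\widetilde\lambda$ is dictated by whether the swap touches row $m$ and by the value of $c$, and recursing on the tableau with that corner deleted (this is also what makes the argument dovetail with the non-rectangular generalization). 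Either way Step 1 forces the structure.

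The main obstacle is exactly this last step: one must verify that the global straightening (or the recursion) is well defined, always lands on the partition $\widetilde\lambda$ and on no other, and is invertible, with the inequality $T_{k+1,c}<T_{k,c+1}$ matching up exactly with standardness of the output. The two-row computation is the reliable guide — in particular to how $c$ and $k$ are read back off the output tableau — but for $m\ge 3$ one has to check that the value of the migrating cell increases monotonically along its northward path and that the slide below it is uniquely determined, which is where the count finally agrees with the Hook-Length value $|S_0(\widetilde\lambda)|$.
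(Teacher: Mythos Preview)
Your normal-form step is correct and is a genuinely nice observation that the paper does not make explicit: for rectangular $\lambda$, writing $\inv(\tau)$ as a sum of Kendall--tau distances $d(\pi_j,\pi_{j+1})$ between consecutive column-rank permutations (with a final $d(\pi_n,\mathrm{id})$) and deducing that a single inversion forces $\pi_1=\cdots=\pi_c=s_k$, $\pi_{c+1}=\cdots=\pi_n=\mathrm{id}$, gives a clean parametrization of $S_1(\lambda)$ by triples $(T,k,c)$ with $T$ standard and $c=n$ or $T_{k+1,c}<T_{k,c+1}$. Your $m=2$ argument via Dyck paths is also complete and correct.

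The gap is exactly where you locate it, and it is a real one: for $m\ge 3$ you sketch two candidate constructions (a jeu-de-taquin ``lift'', or an induction on $N$ routing the largest entry to one of the three removable corners of $\widetilde\lambda$) but carry out neither. Neither sketch is precise enough to check well-definedness, let alone invertibility. The phrase ``float the cell coming out of row $k+1$ northward to position $(1,n+1)$ \dots\ with $k$ encoded in the shape of the slide'' does not specify a map, and the inductive alternative needs a full case analysis---which corner receives $mn$, what the residual shape and residual triple are, and why the row-standardness condition $T_{k+1,c}<T_{k,c+1}$ corresponds exactly to standardness of the output---none of which you supply. That verification is the entire content of the theorem for $m\ge 3$, so as written the proposal proves only the two-row case.

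For comparison, the paper does not pass through your triple parametrization at all. It works directly on the inverted tableau $\tau$: starting from the larger element $b$ of the unique inversion pair, it bumps $b$ one column to the right (into the box of the least entry there exceeding $b$), then back-fills the vacated box by sliding in the smaller of the right/below neighbours until the hole has migrated one column right, and iterates; the last bumped element becomes the sole entry of a new $(n{+}1)$st column, and the trailing hole ends at position $(m,n)$. The inverse runs these moves backward. This is closer in spirit to RSK row-insertion followed by a local jeu-de-taquin cleanup, and its advantage is that each individual step is manifestly standardness-preserving, so the checks are routine. Your $(T,k,c)$ description would be a worthwhile complement, but to finish your argument you would still have to exhibit the map on triples explicitly and prove it bijects, which is comparable work to what the paper does directly.
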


We then turn to tableaux with more than one inversion.  Working downward from the maximum inversion number $M_\lambda$, Theorem \ref{thm: M-1 rectangular} and \ref{thm: M-2 rectangular} combine to prove the following closed formulas enumerating the number of $m \times n$ rectangular tableaux with precisely $M_\lambda - 1$ and $M_\lambda - 2$ inversions:

\begin{theorem}
\label{thm: preview 2}
Take $n \geq 1, m \geq 3$ and consider the m-row rectangular shape $\lambda=(n,\hdots,n)$.  Then $\vert S_{M_\lambda - 1} (\lambda) \vert = mn - 1$ and $\vert S_{M_\lambda - 2} (\lambda) \vert = \frac{(mn-2)(mn+1)}{2}$
\end{theorem}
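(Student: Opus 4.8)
The plan is to exploit the unique maximal tableau $\tau_{\max}$ of shape $\lambda$ supplied by Proposition~\ref{thm: maximal inversion number} and to decompose the inversion statistic column by column. For a row-standard tableau $\tau$ of shape $\lambda=(n^m)$ and each column $c$, let $w_c\in S_m$ be the permutation recording how the rows of column $c$, listed in increasing order of their entries, sit relative to column $c+1$ (for $c<n$, via the ``entry immediately to the right'' map) and relative to the list of row indices $1,\dots,m$ when $c=n$; this last convention absorbs condition~(1) in the definition of inversion into the same rule as condition~(2). A short check then shows that column $c$ contributes exactly $\inv(w_c)$ inversions, so $\inv(\tau)=\sum_{c=1}^n \inv(w_c)$. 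Since $M_\lambda=n\binom m2$ for a rectangle, a tableau lies in $S_{M_\lambda-1}(\lambda)$ (resp.\ $S_{M_\lambda-2}(\lambda)$) exactly when the deficiencies $d_c:=\binom m2-\inv(w_c)$ satisfy $\sum_c d_c=1$ (resp.\ $=2$): all but one $w_c$ must equal the decreasing permutation $\delta=(m,m-1,\dots,1)$ and the exceptional one must have exactly $\binom m2-1$ inversions, respectively either two $w_c$ have $\binom m2-1$ inversions or one has $\binom m2-2$.

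Next I would record the relevant permutation counts. By the Mahonian symmetry $w\mapsto \delta w$, a permutation has $\binom m2-1$ inversions iff it has exactly one, and there are $m-1$ such; it has $\binom m2-2$ inversions iff it has exactly two, and there are $\binom{m-1}{2}+(m-2)$ such, a quantity that vanishes at $m=2$ and is precisely what forces the hypothesis $m\ge 3$ in the second formula. Now fix such a ``profile'' $(w_1,\dots,w_n)$. The profile pins down the relative order of the entries within every column and the way consecutive columns are glued along rows, but not the assignment of the actual values $1,\dots,mn$; hence the tableaux realizing a given profile are precisely the linear extensions of an explicit partial order $P$ on the $mn$ cells, generated by the ``increasing along each row'' relations together with the column relations dictated by the profile. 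For the all-$\delta$ profile $P$ is a single chain, which recovers the uniqueness of $\tau_{\max}$, and replacing one $w_{c_0}$ affects the order within columns $1,\dots,c_0$ only, so every near-maximal profile gives a chain into which a small, controlled amount of ``width'' has been inserted.

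The heart of the argument is the bookkeeping over profiles. For $S_{M_\lambda-1}(\lambda)$ one analyzes how replacing a single $w_{c_0}$ by a permutation with one inversion perturbs the chain underlying $\tau_{\max}$; I expect that this inserts a single two-element antichain (hence contributes two linear extensions) in exactly $n-1$ of the $n(m-1)$ profiles and leaves a chain in the remaining $n(m-1)-(n-1)$, so that the total is $2(n-1)+\bigl(n(m-1)-(n-1)\bigr)=mn-1$. For $S_{M_\lambda-2}(\lambda)$ the same mechanism applies, but there are now three families of profiles—two far-apart single perturbations, two adjacent single perturbations, and one double perturbation inside a single column—and the associated posets can carry two independent two-element antichains or a single three-element antichain, contributing linear-extension counts among $\{1,2,3,4\}$. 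Summing these over all profiles, and verifying that no poset of greater complexity arises, is the main obstacle; this is where the $-1$ correction in $\tfrac{(mn-2)(mn+1)}{2}=\binom{mn}{2}-1$ should materialize. As a running sanity check, the case $n=1$ makes $S_{M_\lambda-k}((1^m))$ the set of permutations of $S_m$ with $k$ inversions, of sizes $m-1$ and $\tfrac{(m-2)(m+1)}{2}$, which matches $mn-1$ and $\tfrac{(mn-2)(mn+1)}{2}$ when $n=1$.
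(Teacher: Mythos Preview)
Your column-profile framework is correct: writing $\inv(\tau)=\sum_c\inv(w_c)$ is valid for rectangular $\lambda$, and the tableaux with a fixed profile really are the linear extensions of the poset generated by the column rankings $\pi_c$ together with the row order. This is a genuinely different route from the paper's. The paper proves both formulas by induction on $n$: it peels off the leftmost column, splits into cases according to how many inversions that column carries, and uses the fact that a column with the full $T_{m-1}$ inversions forces a vertical split of the tableau. That splitting observation is what lets the paper reduce every case to the $(n-1)$-column problem plus a short analysis of at most the first two columns; your approach instead tries to count all profiles and their linear extensions at once.

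The gap is that your proposal is a plan, not a proof: the decisive steps are stated as expectations. For $M_\lambda-1$ you ``expect'' that exactly $n-1$ of the $n(m-1)$ deficit-one profiles produce a two-element antichain and the rest give chains, but you give no argument, and the pattern is subtler than your parenthetical remark suggests. For $m=3$, $n=2$, replacing $w_1$ by $(3,1,2)$ yields a poset with two linear extensions, while replacing $w_1$ by $(2,3,1)$, or replacing $w_2$ by either of $(2,3,1),(3,1,2)$, yields a chain; so which profiles give $2$ versus $1$ depends on the specific near-$\delta$ permutation and its interaction with the adjacent column, not just on the column index $c_0$. You would need to prove precisely which pairs $(c_0,w_{c_0})$ create an incomparable pair in the poset and why there are exactly $n-1$ of them. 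For $M_\lambda-2$ the situation is worse: you list three families of profiles and a menu $\{1,2,3,4\}$ of possible extension counts, and then explicitly say that summing these and ruling out more complicated posets ``is the main obstacle.'' That obstacle is the entire content of the theorem. The paper's inductive/splitting argument avoids having to control the global poset at all---only the interaction of the leftmost one or two columns with the rest matters, and the rest is absorbed into the inductive hypothesis---which is why it can be carried through to a complete proof.
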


Closing the paper are a series of more specific results and conjectures enumerating inverted tableaux whose inversion number is ``sufficiently large" (a realm that we refer to as the ``tail end" of a shape's inversion distribution). All results and conjectures in this paper were developed with the help of a computer program developed by the authors.  Details about this program, as well as several tables of generated results, may be found in Appendix \ref{sec: inversion tables}.

\section{Basic Enumerative Results About Tableau Inversions}
\label{sec: basic results}

In this section, we present a number of foundational enumerative results involving tableau inversions that are independent from the more involved methods of Sections \ref{sec: enumeration i=1} and \ref{sec: enumeration i>1}.

\subsection{Total Number of Inverted Young Tableaux of Shape $\lambda$}
\label{subsec: total number of inverted tableaux}

Let $\lambda = (\lambda_1,\lambda_2,\hdots,\lambda_m)$ be a non-increasing partition of the positive integer $N$, and consider the set $I(\lambda) = \bigcup_i S_i(\lambda)$ of inverted Young tableaux of shape $\lambda$ (with any number of inversions $i \geq 0$).

$I(\lambda)$ is the collection of all tableaux of shape $\lambda$ that are row-standard.  As there is no restriction on the columns of these tableaux, and since there is a unique way to order each row of a tableau so that it is increasing, elements of $I(\lambda)$ are in bijection with ordered partitions of $\lbrace 1,\hdots,N \rbrace$ into $m$ subsets of respective sizes $\lambda_1,\hdots,\lambda_m$.  This immediately proves the following.

\begin{proposition}
\label{thm: total number of inverted tableaux}
Let $\lambda = (\lambda_1,\lambda_2,\hdots,\lambda_m)$ be a non-increasing sequence of positive integers.  Then the total number of inverted Young tableaux of shape $\lambda$ is:

\begin{center}
$\displaystyle{\vert I(\lambda) \vert = \sum_{i=0}^\infty \vert S_i(\lambda) \vert = \binom{\lambda_1 + \lambda_2 + \hdots + \lambda_m}{\lambda_m} \binom{\lambda_1 + \lambda_2 + \hdots + \lambda_{m-1}}{\lambda_{m-1}} \hdots \binom{\lambda_1 + \lambda_2}{\lambda_2} \binom{\lambda_1}{\lambda_1} }$
\end{center}
\end{proposition}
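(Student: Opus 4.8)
The plan is to establish the bijection sketched in the paragraph preceding the proposition and then count the objects on the other side. First I would make precise the claim that an inverted Young tableau of shape $\lambda$ — that is, a row-standard tableau — is the same data as an \emph{ordered set partition} of $\{1,\dots,N\}$ into blocks $B_1,\dots,B_m$ with $\vert B_i\vert = \lambda_i$. The map sends a row-standard tableau $\tau$ to the tuple $(B_1,\dots,B_m)$, where $B_i$ is the set of entries appearing in row $i$ of $\tau$. Since $\tau$ is a standard filling, the $B_i$ are disjoint and their union is $\{1,\dots,N\}$, and $\vert B_i\vert = \lambda_i$ by the definition of the shape. Conversely, given such an ordered set partition, I would place the elements of $B_i$ in the $i$-th row in increasing order; this is the unique row-standard filling with that row content, so the two maps are mutually inverse. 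This gives $\vert I(\lambda)\vert$ equal to the number of ordered set partitions of type $(\lambda_1,\dots,\lambda_m)$.

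Next I would count the ordered set partitions by choosing the blocks one row at a time, but in the order indicated by the product in the statement: first choose $B_m \subseteq \{1,\dots,N\}$ (there are $\binom{N}{\lambda_m}$ ways, where $N = \lambda_1 + \dots + \lambda_m$), then choose $B_{m-1}$ from the remaining $\lambda_1 + \dots + \lambda_{m-1}$ elements (there are $\binom{\lambda_1+\dots+\lambda_{m-1}}{\lambda_{m-1}}$ ways), and so on down to $B_1$, which is forced, contributing the factor $\binom{\lambda_1}{\lambda_1}=1$. Multiplying these binomial coefficients gives exactly the product displayed in the proposition. (I would remark that the order of the factors is immaterial — the product telescopes to the multinomial coefficient $\binom{N}{\lambda_1,\dots,\lambda_m}$ — but keeping the stated order makes the bijective reasoning transparent.) Finally, combining this count with the bijection of the first step yields $\vert I(\lambda)\vert$ as claimed, and since $I(\lambda) = \bigsqcup_{i\geq 0} S_i(\lambda)$ is a finite set, this common value also equals $\sum_{i=0}^\infty \vert S_i(\lambda)\vert$.

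There is essentially no hard part here: the content is the observation that row-standardness imposes no coupling \emph{between} rows, only \emph{within} each row, and that within-row ordering is unique. The only point requiring a word of care is the well-definedness of the inverse map — one must note that filling row $i$ with the elements of $B_i$ in increasing order automatically produces a row-standard tableau regardless of the other rows — and the trivial but worth-stating fact that $\sum_i \vert S_i(\lambda)\vert$ is a finite sum, which follows because there are only finitely many fillings of $\lambda$ altogether (equivalently, because the maximal inversion number $M_\lambda$ of Proposition~\ref{thm: maximal inversion number} is finite).
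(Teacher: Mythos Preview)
Your proof is correct and follows the same approach as the paper: both establish the bijection between row-standard tableaux of shape $\lambda$ and ordered set partitions of $\{1,\dots,N\}$ into blocks of sizes $\lambda_1,\dots,\lambda_m$, then count the latter by the product of binomial coefficients. The paper presents this argument in the paragraph immediately preceding the proposition rather than in a separate proof environment, and you have simply fleshed out the details more explicitly.
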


In terms of Fresse's results in \cite{Fresse}, Proposition \ref{thm: total number of inverted tableaux} states that the sum of all Betti numbers of $F_\lambda$ is given by $\vert I(\lambda) \vert$.  As one application of this straightforward result, for $\lambda= (n,n)$ the quantity $\vert I(\lambda) \vert$ gives the rank of the $sl_2$ skein module of surfaces over the solid torus with $2n$ boundary points \cite{Russell}.  In an upcoming paper \cite{BDR}, one of the authors proves that $\vert I(\lambda) \vert$ with $\lambda = (n,n,n)$ gives the rank of the $sl_3$ skein module of surfaces over the solid torus with $2n$ boundary points.

\subsection{Maximum Number of Inversions for Shape $\lambda$}
\label{subsec: maximal number of inversions}

A natural question to arise from consideration of Proposition \ref{thm: total number of inverted tableaux} is how many nonzero $\vert S_i(\lambda) \vert$ appear in the summation for $\vert I(\lambda) \vert$.  Clearly, a row-standard tableau with a finite number of entries can only admit a finite number of inversions, as each entry can only be involved in a maximum of one inversion pair with every other entry in its column.  To determine ``maximum inversion number", we consider column heights in inverted Young tableaux of shape $\lambda$.

First consider the case of single-column tableau with $N$ total entries, so that $\lambda = (1,1,\hdots,1)$.  The maximum number of inversions for a tableau of this shape occurs when the entries appear in descending order $N,\hdots,2,1$.  In this situation we have $N-1$ inversion pairs $(1,2),(1,3),\hdots,(1,N)$ where $1$ is the smaller entry, $N-2$ inversion pairs $(2,3),\hdots,(2,N)$ where $2$ is the smaller entry, etc.  It follows that the maximum number of inversions for a tableau of shape $\lambda$ is $T_{N-1} = 1+2+\hdots+(N-1) = \binom{N}{2}$, which is the $(N-1)$\textsuperscript{st} triangular number.  Notice that the ``reverse order" tableau described above is the unique inverted Young tableau of this shape realizing the maximum possible inversion number $T_{N-1}$.

Generalizing to tableaux of arbitrary shape $\lambda$ is essentially a repeated application of the procedure above:

\begin{proposition}
\label{thm: maximal inversion number}
Let $\lambda = (\lambda_1,\hdots,\lambda_m)$, and define $h_j = \vert \lbrace \lambda_i \ \vert \ \lambda_i \geq j \rbrace \vert$ to be the height of the j\textsuperscript{th} column for any tableau of shape $\lambda$.  Then the maximum number of inversions for any inverted Young tableau of shape $\lambda$ is:

\begin{center}
$\displaystyle{M_\lambda = \sum_j T_{h_j-1} = \sum_j \binom{h_j}{2}}$
\end{center}

\noindent Moreover, this maximum inversion number is realized by precisely one inverted Young tableau of shape $\lambda$, so that $\vert S_{M_\lambda}(\lambda) \vert = 1$.

\end{proposition}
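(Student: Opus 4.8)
The plan is to reduce the general case to the single-column case treated just above, by analyzing each column of a shape-$\lambda$ tableau more or less independently. The key observation is that the inversion count of a row-standard tableau $\tau$ splits as a sum over columns: if $\inv_j(\tau)$ denotes the number of inversion pairs whose two entries both lie in column $j$, then $\inv(\tau) = \sum_j \inv_j(\tau)$, since by definition every inversion pair lies within a single column. Thus $\inv(\tau) \leq \sum_j M_j$, where $M_j$ is the largest possible value of $\inv_j$ over all row-standard tableaux of shape $\lambda$. Column $j$ has $h_j$ boxes, so it hosts $\binom{h_j}{2}$ pairs of entries; an inversion in column $j$ is a pair that is ``out of order'' in the sense of condition (1) or (2). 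I would argue $M_j = \binom{h_j}{2}$ and that equality $\inv(\tau) = M_\lambda = \sum_j \binom{h_j}{2}$ forces, simultaneously for every column, that \emph{every} pair of entries in that column is an inversion pair; then I would show there is a unique row-standard tableau with this property.

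The main steps, in order. First, I would establish the decomposition $\inv(\tau)=\sum_j \inv_j(\tau)$ and the per-column bound $\inv_j(\tau)\le\binom{h_j}{2}$, which is immediate since there are only $\binom{h_j}{2}$ pairs in a column of height $h_j$. Summing gives $\inv(\tau)\le M_\lambda$. Second, for the realization and uniqueness: the equality $\inv(\tau)=M_\lambda$ holds if and only if $\inv_j(\tau)=\binom{h_j}{2}$ for every $j$, i.e.\ every pair in every column is an inversion. I would then show such a ``totally inverted'' tableau exists and is unique. A clean way to pin it down: examine columns from right to left. In the rightmost column present at a given row-length level, no entry has a neighbor to its right, so condition (1) applies, and ``every pair is an inversion'' means the column is filled in strictly \emph{decreasing} order top-to-bottom. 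Moving left, condition (2) says that for entries $i<j$ in the same column with right-neighbors $i',j'$, we need $i'>j'$; combined with row-standardness ($i<i'$, $j<j'$) and ``every pair inverts,'' this recursively forces the relative vertical order of entries in column $j$ to be the \emph{reverse} of their order in column $j+1$. Hence the vertical order of the entries within each column is completely determined by the tableau to its right, and, proceeding right-to-left, the whole tableau is determined once we know which $N$ numbers it uses in which rows.

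Third, I would pin down that global placement. Since $\tau$ is row-standard, each row is increasing; and the totally-inverted condition, pushed all the way left, should force the first column to be decreasing, which together with row-standardness forces $\tau$ to be the tableau whose rows are the consecutive blocks $\{1,\dots,\lambda_1\}$, $\{\lambda_1+1,\dots,\lambda_1+\lambda_2\}$, etc., but written so that... — here one must be slightly careful, because row-standardness fixes each row's internal order, so really the only freedom is \emph{which} set of $\lambda_i$ values occupies row $i$, and the recursive column constraints must cut this down to one choice. I expect the cleanest route is: show the unique candidate explicitly (rows are consecutive integer blocks, in increasing order along rows), verify directly that it is row-standard and that every pair in every column is an inversion (a short check using conditions (1) and (2) on blocks of consecutive integers stacked so that row $i$ lies below row $i{+}1$ in value), and then invoke the ``order within each column is forced by the column to its right'' argument to conclude no other tableau can be totally inverted. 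Combining the bound from Step~1 with existence-and-uniqueness from Steps~2--3 yields both $M_\lambda=\sum_j\binom{h_j}{2}$ and $|S_{M_\lambda}(\lambda)|=1$.

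The main obstacle I anticipate is Step~2: carefully proving that ``every pair in column $j$ is an inversion'' rigidly determines the vertical order of column $j$'s entries in terms of column $j{+}1$, handling condition (2) when right-neighbors exist versus condition (1) at the ragged right edge, and making sure the induction from the rightmost column leftward actually closes (in particular that the constraints are consistent, i.e.\ that a totally inverted tableau exists at all). The bound in Step~1 is easy; the subtlety is entirely in the uniqueness/rigidity argument.
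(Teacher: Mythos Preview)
Your Steps 1 and 2 are sound and match the paper's approach: the per-column bound $\inv_j(\tau)\le\binom{h_j}{2}$ gives the upper bound, and equality forces every pair in every column to be an inversion. Your observation that ``every pair inverts'' in column $j$ forces the right-neighbor map to be order-reversing on values is exactly the right lever.

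The gap is in Step 3: your candidate tableau is wrong. The tableau whose rows are the consecutive blocks $\{1,\dots,\lambda_1\},\{\lambda_1+1,\dots,\lambda_1+\lambda_2\},\dots$ is the \emph{column-standard} tableau and has \emph{zero} inversions, not the maximum. For $\lambda=(2,2,2)$ your candidate has rows $(1,2),(3,4),(5,6)$, whereas the unique $6$-inversion tableau has rows $(1,6),(2,5),(3,4)$. Relatedly, your claim that the leftmost column must be decreasing top-to-bottom is false: in this example it is $1,2,3$ (increasing), and in the paper's $\lambda=(3,3,2,2)$ example it is $2,1,3,4$ (neither monotone).

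The correct conclusion---and it actually follows from your own Step 2 analysis---is that the \emph{columns}, not the rows, must be consecutive integer blocks, with the rightmost column holding the largest values. From your setup: if columns $j$ and $j+1$ both have height $h$ with entries $a_1<\cdots<a_h$ and $b_1<\cdots<b_h$, your order-reversing observation says the right neighbor of $a_i$ is $b_{h+1-i}$; row-standardness gives $a_i<b_{h+1-i}$ for each $i$, and taking $i=h$ yields $a_h<b_1$. So every entry of column $j$ is smaller than every entry of column $j+1$. (When $h_j>h_{j+1}$ one needs the paper's device of filling the empty right-hand boxes with phantom large values.) This simultaneously pins down the content of each column and, recursively from the right, the vertical arrangement within it. Replace ``rows'' by ``columns'' in your construction and your argument goes through, essentially coinciding with the paper's.
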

\begin{proof}
We have already shown that the maximum number of inversions within a single column of height $h_j$ is $T_{h_j-1}$, so when we range over all columns of our tableau we clearly can't obtain more than $\sum_j T_{h_j-1}$ total inversions.  Thus we merely need to construct a tableau that exhibits this maximum number of inversions.

Let $N = \lambda_1 + \hdots + \lambda_m$.  We work from right-to-left through the columns of the tableau.  For the rightmost ($n$\textsuperscript{th}) column, place $N,N-1,\hdots N-h_n$ in decreasing order from top-to-bottom as in the single column case.  There are then $T_{h_n-1}$ inversions involving elements of this column.  Moving one column to the left, we place the next $h_{n-1}$ largest remaining entries in the unique order that guarantees $T_{h_{(n-1)}-1}$ inversions in its column.  This is accomplishing by ``flipping" the order of the column relative to the column on its right, with the important convention that any empty boxes on the right are filled with arbitrary large numbers that increase from top-to-bottom, so that the smallest entry in the leftward column is directly to the right of the largest entry in the rightward column, etc.  Continuing to work leftward through the columns, we repeat this procedure by placing the $h_j$ largest remaining entries in the unique order that guarantees $T_{h_j - 1}$ inversions in the $j$\textsuperscript{th} column.  An example of this procedure for a non-rectangular shape is shown in Figure \ref{fig: maximal inversion number example}.

Notice that this resulting tableau is guaranteed to be row-standard, as every entry in a given column is smaller than every entry in columns to its right.  Also notice that this is the only possible tableau with the maximum of $\sum_j T_{h_j-1}$ inversions: changing the ordering within any column reduces the number of inversions in that column, and switching entries between columns is guaranteed to reduce the number of inversions possible in the leftward column (as there is now as entry in that leftward column that is larger than some entry in the rightward column).
\end{proof}

\begin{figure}[h!]
\centering
\begin{ytableau}
*(gray!50) 2 & *(gray!50) 7 & *(gray!50) 10 \\
*(gray!50) 1 & *(gray!50) 8 & *(gray!50) 9 \\
*(gray!50) 3 & *(gray!50) 6 \\
*(gray!50) 4 & *(gray!50) 5
\end{ytableau}
\caption{The unique inverted Young tableau of shape $\lambda = (3,3,2,2)$ with the maximum number of $T_{3}+T_{3}+T_{1} = 13$ inversion pairs.  When giving the middle column the ``reverse" ordering relative to the rightmost column, we treat the empty slots in the lower-right as 11 and 12.}
\label{fig: maximal inversion number example}
\end{figure}

For a quick specialization to rectangular tableaux of size $m \times n$, note that Proposition \ref{thm: maximal inversion number} gives a maximum inversion number of $M_{n,\hdots,n} = n \binom{m}{2}$.  In particular, two-row rectangular tableaux have maximal inversion number $M_{n,n}= n$ equal to their number of columns.

\subsection{Enumerating i-Inverted Young Tableaux of Shape $\lambda = (1,1,\hdots,1)$}
\label{subsec: enumerating one-column inverted tableaux}

In general, it is extremely difficult to determine the number of inverted tableaux of shape $\lambda$ with precisely $i$ inversions, but there are several basic shapes $\lambda$ for which this is computable without the complicated methods of Sections \ref{sec: enumeration i=1} and \ref{sec: enumeration i>1}.  The first of these shapes are single-column tableaux, so that $\lambda = (1,1,\hdots,1)$.  In this case, tableau inversions are identical to ordinary permutation inversions, which are well-studied in the literature.

The number of length-$m$ permutations with precisely $i$ inversions, and hence the number of tableaux of size $m \times 1$ with precisely $i$ inversions,  is given by the Mahonian number $M(m-1,i)$ \cite{Stanley1}.  For a fixed $m \geq 1$, in \cite{Stanley1} it is also shown the Mahonian numbers have generating function:

\begin{equation}
\label{eq: Mahonian numbers}
\sum_{i=0}^\infty M(m-1,i)x^i = \prod_{j=0}^{m-1} \sum_{k=0}^j x^k = (x+1)(x^2 + x + 1) \hdots (x^{m-1} + \hdots + x + 1)
\end{equation}

Notice that the generating polynomial of Equation \ref{eq: Mahonian numbers} is of degree $T_{m-1}$ and that it is always monic, corroborating our results from Proposition \ref{thm: maximal inversion number}.

\subsection{Enumerating i-Inverted Young Tableaux of Shape $\lambda = (n,n)$}
\label{subsec: enumerating two-row inverted tableaux}

Although we can no longer rely upon previously-established results about permutation inversions, the other situation where it is still tractable to directly compute the number of i-inverted tableaux is with two-row rectangular tableaux.  We immediately jump to the two-row case because one-row tableaux are entirely trivial: if $\lambda = (n)$ then $\vert S_0(\lambda) \vert = 1$ and $\vert S_i(\lambda) \vert = 0$ for all $i \geq 1$, as inversions require at least two elements in a column.

Before proceeding to the two-row case, we establish some new terminology.  An $m \times n$ rectangular tableau $\tau$ is said to be a \textbf{(vertically) split tableau} if the first $j$ columns of $\tau$ (where $j < n$) contain the first $jm$ entries of $\tau$.  Hence, a split tableau is merely two disjoint tableaux sitting side-by-side, with the entries of the rightward tableau re-indexed to continue where the leftward tableau stopped.  Notice that the number of split (standard Young) tableaux of size $m \times n$ that split after the $j$\textsuperscript{th} column equals the number of (standard Young) tableaux of size $m \times j$ times the number of (standard Young) tableaux of size $m \times (n-j)$.  Also notice that an inverted Young tableau $\tau$ splits after its $j$\textsuperscript{th} column iff its standardization $st(\tau)$ splits after its $j$\textsuperscript{th} column.

It is well-known that the number of rectangular standard Young tableaux of size $2 \times n$ equals the $n$\textsuperscript{th} Catalan number $C_n$, a fact that can be directly calculated from the Hook-Length Formula of Theorem \ref{thm: hook length formula} or by setting up a bijection with noncrossing matchings.  This means that $\vert S_0(n,n) \vert = C_n$.

To find $\vert S_i(n,n) \vert$ for $i > 0$, we begin by noting that each column in a two-row inverted Young tableau has either $0$ or $1$ inversion pair.  If a column admits an inversion pair, this means that the larger entry in that column containing the inversion much be smaller than the smallest of the two entries in the column directly to the right.  It follows that a two-row inverted Young tableau must split after any column in which it has an inversion.  For an example of this phenomenon see Figure \ref{fig: inversions split tableaux}, where larger subscripts correspond to larger entries.  In that example, $b_2 < c_1$ ensures that the tableau must split.

\begin{figure}[h!]
\centering
\begin{ytableau}
a_2 & *(gray!50) b_2 & c_1 \\
a_1 & *(gray!50) b_1 & c_2 \\
\end{ytableau}
\hspace{.03in}
\raisebox{-4pt}{$\rightarrow$}
\hspace{.03in}
\begin{ytableau}
a_2 & *(gray!50) b_2 \\
a_1 & *(gray!50) b_1 \\
\end{ytableau}
\hspace{.01in}
\begin{ytableau}
c_1 \\
c_2 \\
\end{ytableau}
\caption{An Inversion Guarantees a Split Two-Row Tableau}
\label{fig: inversions split tableaux}
\end{figure}

\begin{theorem}
\label{thm: two-row tableaux inversions}
Let $\lambda = (n,n)$ for some $n \geq 1$.  Then the number of inverted Young tableaux of shape $\lambda$ with precisely $i$ inversions is:

\begin{center}
$\displaystyle{\vert S_i(n,n) \vert = \left( \sum_{k_1 + \hdots + k_i = n} C_{k_1} C_{k_2} \hdots C_{k_i} \right) \ + \ \left( \sum_{l_1 + \hdots + l_{i+1} = n} C_{l_1} C_{l_2} \hdots C_{l_{i+1}} \right) } $
\end{center}

\noindent Where $C_j$ is the k\textsuperscript{th} Catalan number, and the summations run over all ordered partitions of $n$ of length $i$ and of length $i+1$, respectively. 
\end{theorem}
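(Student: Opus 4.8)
The plan is to leverage the structural observation already made in the text: a two-row inverted tableau with $i$ inversions must split after every column containing an inversion, and between those split points each ``block'' is an inversion-free (standard) two-row tableau. So I would set up a decomposition of an arbitrary $\tau \in S_i(n,n)$ according to the columns at which inversions occur, show each such $\tau$ corresponds to an ordered tuple of Catalan-counted blocks, and then count the tuples.

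\medskip

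First I would establish the key lemma precisely: in any two-row inverted tableau, a column containing an inversion pair forces the tableau to split immediately after that column. The text already sketches this — if column $c$ has entries $x > y$ forming an inversion (with $x$ on top, $y$ on bottom, contradicting column-standardness), then $y$ has no entry to its right and $y$ lies below $x$, and condition (1) in the definition of inversion combined with row-standardness forces $\max(x,y) < \min$ of the two entries in column $c+1$, hence columns $1,\dots,c$ contain exactly the entries $1,\dots,2c$. I would also note the converse boundary behavior: a column that does \emph{not} contain an inversion need not be a split point. Then I would argue that the split points partition $\tau$ into consecutive blocks $B_1, B_2, \dots$ where every block except possibly the last \emph{ends} in an inversion column, and crucially each block is itself a two-row \emph{standard} tableau after re-indexing (it contains no inversions in its interior because any interior inversion would force a further split).

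\medskip

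Next I would do the bookkeeping. Suppose $\tau$ has its $i$ inversions occurring in columns $c_1 < c_2 < \cdots < c_i$. These need not be consecutive, but here is the subtlety that makes the count clean: I claim that if a non-final block has width $w$ and ends in an inversion column, then that block, viewed as a standard two-row tableau of size $2 \times w$, can be \emph{any} such tableau, because the last column of any $2\times w$ standard tableau automatically forms an inversion relative to... no — wait, that is false in general. So instead the correct statement is: a block that ends at an inversion column is a $2 \times w$ standard tableau \emph{whose final column is an inversion}, i.e. whose final column, once placed to the left of a larger block, inverts. But inversion of a two-row column is determined purely internally once we know the block splits off: the column $\binom{a}{b}$ with $a < b$ is column-standard and is \emph{not} an inversion when standing alone in its block, yet it is an inversion in $\tau$ precisely when... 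Here I need to be careful, and I expect this to be the main obstacle: disentangling exactly which internal configurations of a block produce an inversion at its right edge. The resolution is that \emph{every} column of a $2\times w$ standard tableau, when that tableau is split off as a block followed by strictly larger entries, has its bottom-right entry smaller than everything to the right, so the inversion at column $c_k$ is governed by whether the block's last column is ``inverted'' — and in a standard tableau columns are by definition increasing top-to-bottom, so in fact I should think of each inversion as being \emph{created} by reordering (swapping) the two entries in one column of an otherwise-standard split tableau. This reframes everything: an element of $S_i(n,n)$ is a split standard tableau together with a choice of $i$ of its split-columns whose two entries get swapped (creating the inversion), where a ``split-column'' means the last column of one of the standard blocks.

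\medskip

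With that reframing the count is immediate. A split standard tableau of shape $2\times n$ decomposing into $t$ indecomposable... no, into $t$ \emph{chosen} blocks of widths $k_1 + \cdots + k_t = n$ contributes $C_{k_1}\cdots C_{k_t}$ tableaux, and has $t$ split-columns (the right edge of each block), of which we may choose any subset to swap. But we must choose exactly $i$ swaps and we must have chosen the block boundaries to include all $i$ inversion columns — so the clean way is: fix which blocks get swapped. If exactly $i$ blocks get their last column swapped, reading left to right these $i$ swapped blocks together with the (possibly empty) run of unswapped blocks \emph{after the last swapped block} must be merged: the unswapped blocks following the final inversion are not forced to be separate, so they coalesce into a single standard tableau of some width $\ell_{i+1} \ge 0$ (width $0$ meaning the last column of $\tau$ is itself an inversion), while the first $i$ pieces have widths $k_1, \dots, k_i \ge 1$ summing with $\ell_{i+1}$ to $n$. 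If $\ell_{i+1} \ge 1$ this gives the term $\sum_{l_1 + \cdots + l_{i+1} = n} C_{l_1}\cdots C_{l_{i+1}}$ (ordered partitions into $i+1$ positive parts, with $C_{l_j}$ counting the standard filling of each piece), and if $\ell_{i+1} = 0$ — i.e. the rightmost column of $\tau$ is an inversion — the tail vanishes and we get $\sum_{k_1 + \cdots + k_i = n} C_{k_1}\cdots C_{k_i}$. Summing the two cases yields exactly the claimed formula. The one thing left to verify is that distinct data (block widths plus standard fillings plus which are swapped) yield distinct $\tau$ and that every $\tau$ arises exactly once; this follows because the inversion columns of $\tau$ are intrinsic to $\tau$ and recover the $k_j$'s, while un-swapping those columns recovers the underlying split standard tableau uniquely. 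The main obstacle, as noted, is pinning down the ``swap'' description rigorously — in particular confirming that swapping the two entries of the last column of a standard block always produces a valid row-standard tableau with exactly one new inversion and no collateral inversions elsewhere — which requires checking the inversion definition's condition (1) against the fact that that column's entries are both less than everything to their right.
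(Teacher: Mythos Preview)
Your overall decomposition --- split at the inversion columns, argue each resulting block contributes a Catalan factor, and separate into two cases according to whether the final column carries an inversion --- is exactly the paper's approach, and your derivation of the two summations from that decomposition matches the paper's. The genuine gap is the one you yourself flag as ``the main obstacle'': the \emph{swap the last column} mechanism does not work. For the standard tableau with rows $(1,2)$ and $(3,4)$, swapping the second column gives rows $(1,4)$ and $(3,2)$, which is not row-standard. For the standard tableau with rows $(1,3)$ and $(2,4)$, swapping the second column gives rows $(1,4)$ and $(2,3)$, which \emph{is} row-standard but now has two inversions: column~$2$ is an inversion by rule~(1), and column~$1$ becomes an inversion by rule~(2) since its right-neighbours satisfy $4>3$. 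So a single-column swap can both destroy row-standardness and create collateral inversions in earlier columns; the verification you hoped would go through fails.

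The correct mechanism is this. In a two-row tableau, column $j$ (for $j<n$) carries an inversion precisely when columns $j$ and $j{+}1$ have opposite ``orientation'' (top entry larger versus bottom entry larger); this is a direct reading of rule~(2). Consequently, within any block lying between consecutive inversion columns, every column has the \emph{same} orientation, and adjacent blocks have opposite orientations --- the pattern is forced, alternating from the rightmost block leftward (rightmost block bottom-larger if column $n$ is not an inversion, top-larger if it is, by rule~(1)). A row-standard $2\times w$ tableau in which every column has a fixed common orientation is either a standard Young tableau or a standard Young tableau with its two rows interchanged; in either case there are exactly $C_w$ of them. That is the Catalan bijection --- interchange the two rows of the entire block, not just its last column. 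With this repair your argument goes through and coincides with the paper's (which simply asserts the product $C_{j_1}C_{j_2-j_1}\cdots$ for the sub-tableaux without spelling out the orientation analysis).
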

\begin{proof}
First observe that, if $i = 0$, then the first summation is empty and the second summation contains the single summand $C_n$ corresponding to the unique length-one partition $(n)$ of $n$.  Thus $\vert S_0(n,n) \vert = C_n$, as detailed above.

For $i \geq 1$, consider the subset of $i$-inverted two-row tableaux $\tau$ where the $i$ inversions are located at columns $j_1,\hdots,j_i$.  As mentioned above, any such tableau must vertically split after each of these columns.  If $j_i = n$, the tableau splits into $i$ sub-tableaux $\tau_0,\hdots, \tau_{i-1}$ such that $\tau_q$ ends after column $j_{q+1}$.  If $j_i < n$, the tableau splits into $i+1$ sub-tableaux $\tau_0,\hdots,\tau_i$ such that $\tau_q$ similarly ends after column $j_{q+1}$ for $q<i$ and $\tau_i$ ends after column $n$.  If $j_i = n$, we may conclude that the number of such tableaux is $C_{j_1}C_{j_2 - j_1}\hdots C_{j_i - j_{(i-1)}}$, which is a summand from the first summation in the Theorem.  If $j_i < n$, we may conclude that the number of such tableaux is $C_{j_1}C_{j_2 - j_1}\hdots C_{n - j_i}$, which is a summand from the second summation in the Theorem.  Noting that any ordered partition of $n$ of length $i$ or $i+1$ uniquely corresponds to a selection of $i$ columns from our tableau (including the $n$\textsuperscript{th} column or not including the n\textsuperscript{th} column, respectively), ranging over all possible partitions gives the full summation for $\vert S_i(n,n) \vert$ in the Theorem. 
\end{proof}

\begin{example}
\label{ex: two-row inversion numbers}

For $n=3$, Theorem \ref{thm: two-row tableaux inversions} gives:

\vspace{.05in}

\centering
\begin{tabular}{| >{$}c<{$} | >{$}l<{$} | >{$}l<{$} |}
\hline i & \lambda \vdash 3 & \vert S_i(3,3) \vert \\ \hline
0 & (3) & 5 \\
1 & (3),(2,1),(1,2) & 5 + 2*1 + 1*2 = 9 \\
2 & (2,1),(1,2),(1,1,1)& 2*1 + 1*2 + 1*1*1 = 5\\
3 & (1,1,1) & 1*1*1 = 1\\ \hline
\end{tabular}
\end{example}

Readers in search of additional combinatorial identities should note that, when successively ranging over all $n > 0$, the sequence formed by the $\vert S_i(n,n) \vert$ is A039599 on OEIS \cite{OEIS}.

In light of Theorem \ref{thm: Fresse}, Theorem \ref{thm: two-row tableaux inversions} applies to give a closed formula for all Betti numbers of the Springer variety $F_\lambda$ when $\lambda = (n,n)$.  Also note that, as $(1,\hdots,1)$ is the only partition of $n$ of length at least $n$, then $M_{n,n} = n$ and $\vert S_n(n,n) \vert = 1$ as predicted by Proposition \ref{thm: maximal inversion number}.

\section{Enumerating $1$-Inverted Young Tableaux}
\label{sec: enumeration i=1}

For the remainder of this paper, we introduce a number of techniques that allow us to calculate the number of inverted Young tableaux $\vert S_i(\lambda) \vert$ with precisely $i$ inversions for a variety of shapes $\lambda$.  One of our primary techniques will be to develop bijections between $i$-inverted tableaux of shape $\lambda$ and $j$-inverted tableaux of some other shape $\widetilde{\lambda}$.  In some cases, this second set of inverted Young tableaux will then be directly enumerable via the Hook-length formula or the methods of Section \ref{sec: basic results}.  In this particular section we begin by restricting our attention to tableaux with precisely one inversion.  We present an algorithm for directly calculating $\vert S_1(\lambda) \vert$ for rectangular tableaux of shape $\lambda = (n,\hdots,n)$.  We then offer a direct generalization to arbitrary (non-rectangular) shapes $\lambda$. 

\subsection{Calculating $\vert S_1(\lambda) \vert$, Rectangular Case}
\label{subsec: enumeration, rectangular i=1}

Let $\lambda = (n,\hdots,n)$, with $m$ consecutive $n$'s, so that we are dealing with rectangular tableaux of size $m \times n$.  The goal of this subsection is to develop a bijection that proves the following.

\begin{theorem}
\label{thm: enumeration, rectangular i=1}
Let $n,m \geq 1$, and consider the m-row shapes $\lambda = (n,\hdots,n)$, $\widetilde{\lambda} = (n+1,n,\hdots,n,n-1)$.  Then $\vert S_1(\lambda) \vert = \vert S_0(\widetilde{\lambda}) \vert$. 
\end{theorem}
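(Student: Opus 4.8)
The plan is to construct an explicit bijection $\Phi \colon S_1(\lambda) \to S_0(\widetilde{\lambda})$ that takes an $m \times n$ rectangular tableau with exactly one inversion to a standard Young tableau of the ``stair-step'' shape $\widetilde{\lambda} = (n+1,n,\ldots,n,n-1)$. The first thing I would do is understand precisely what a $1$-inverted rectangular tableau looks like. Since $\tau \in S_1(\lambda)$ has a single inversion pair $(i,j)$, that pair lives in some column $c$, in two adjacent rows $r$ and $r+1$ (adjacency should follow from minimality: if the inversion were between non-adjacent rows of the same column, the intervening entries would force additional inversions, since $\tau$ is row-standard and every other column is column-standard). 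So $\tau$ agrees with a genuine standard Young tableau except that the entries in positions $(r,c)$ and $(r+1,c)$ are swapped out of column order. I would make this structural description rigorous first, because the whole bijection depends on it.

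Next I would define $\Phi$ geometrically. Given such a $\tau$ with its inversion in column $c$ spanning rows $r,r+1$, the idea is to ``slide'' that offending column: append a new box at the end of row $r$ (giving it length $n+1$, the first row of $\widetilde\lambda$ — here one should check the inversion is forced into the top row, or else reindex rows so that $r=1$, which is the natural normalization) and delete a box from the end of the last row (giving it length $n-1$), then redistribute the $mn$ entries along this new shape in the unique way that is row- and column-standard. Equivalently, and probably cleaner to write down: reading $\tau$'s columns left to right, the inversion-free columns to the left of $c$ form a standard block, and the key move is to reinterpret the column containing the inversion together with the ``deficiency/surplus'' at the two ends as a valid standard filling of the bent shape. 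The inverse map $\Phi^{-1}$ takes a standard tableau of shape $\widetilde\lambda$, locates where the shape deviates from the $m\times n$ rectangle (the extra box in row $1$, the missing box in row $m$), and performs the reverse slide, producing a rectangular tableau whose unique failure of column-standardness sits exactly where the slide ``closed up'' the bend. I would then verify that $\Phi(\tau)$ is genuinely column-standard (this is where the $(n+1,n,\ldots,n,n-1)$ shape is exactly right: the extra top box absorbs what would have been the larger of the two swapped entries, and the shortened bottom row removes the slot that would have been violated) and that $\Phi^{-1}\circ\Phi = \mathrm{id}$ and $\Phi\circ\Phi^{-1}=\mathrm{id}$.

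The main obstacle, I expect, is showing the map is well-defined and bijective simultaneously — specifically, proving that (a) every $1$-inverted rectangular tableau has its inversion in a ``canonical position'' after the natural normalization so that the slide is unambiguously defined, and (b) the slide never creates a second column-standardness violation nor destroys row-standardness, in either direction. Handling (a) amounts to the structural lemma about adjacency of the inverted rows and controlling which entries can occupy $(r,c)$ and $(r+1,c)$; handling (b) is a careful local check comparing the entry that gets pushed into the new top-row box against its would-be neighbors. A secondary concern is the boundary behavior when $m$ is small (e.g. $m=1$, where $\widetilde\lambda$ degenerates) or when $n=1$; for $m=1$ both sides are trivially $0$ except that $\widetilde\lambda=(2,0)=(2)$ has one SYT while $S_1(n)=\emptyset$ — so I would need to either restrict to $m\geq 2$ or check that the degenerate shape conventions make the count come out to $0$ as well. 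Once the bijection is set up and these edge cases are dispatched, Theorem~\ref{thm: enumeration, rectangular i=1} follows immediately, and combined with the Hook-Length Formula it gives a closed formula for $|S_1(\lambda)|$.
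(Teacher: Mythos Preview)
Your proposal has a genuine gap that would prevent the argument from going through. The structural lemma you sketch first is correct: a single inversion in a row-standard tableau must involve two entries in \emph{adjacent} rows of some column $c$. But your next move---``the inversion is forced into the top row, or else reindex rows so that $r=1$''---is false and cannot be repaired by renaming rows. The inversion can sit between \emph{any} pair of adjacent rows in \emph{any} column. For instance, in $S_1(3,3,3)$ the tableau with columns $(1,4,3)$, $(2,5,8)$, $(6,7,9)$ has its lone inversion $(3,4)$ in rows $2$ and $3$ of column $1$. Your map would want to add a box at the end of row $2$, producing the non-partition shape $(3,4,2)$, and ``reindexing rows'' destroys both row-standardness and the inversion count. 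So the single geometric slide you describe is not well-defined on all of $S_1(\lambda)$.

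The paper's proof handles this with a genuinely different mechanism: an RSK-style \emph{iterated bumping} procedure. Starting from the larger entry $b$ of the inversion pair in column $k$, one defines a sequence $b=c_k < c_{k+1} < \cdots < c_n$ where each $c_{j+1}$ is the smallest entry in column $j+1$ exceeding $c_j$; each $c_j$ is pushed into column $j+1$, displacing $c_{j+1}$, and the resulting hole in column $j$ is filled by jeu-de-taquin slides (choosing the smaller of the entry below or to the right). Only the final distinguished element $c_n$ ends up in the new $(n+1)$st column, and the cumulative effect of the back-filling is what vacates the box at position $(m,n)$. The inverse runs the slides in reverse and reintroduces the inversion at the column where a distinguished element can first be absorbed without sliding leftward. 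The point is that the change of shape from $\lambda$ to $\widetilde\lambda$ is a \emph{global} consequence of a cascade across many columns, not a local operation at the inversion site; your ``redistribute entries in the unique standard way'' loses the information needed to invert the map. If you want to pursue your own construction, you will need to replace the one-step slide with this kind of column-by-column insertion.
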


Theorem \ref{thm: enumeration, rectangular i=1} relates the number of 1-inverted rectangular Young tableaux of size $m \times n$ to the number of standard Young tableaux of a certain ``stair-step" shape $\widetilde{\lambda}$.  $\widetilde{\lambda}$ is obtained from $\lambda$ by removing the box at the lower-right corner of our $m \times n$ rectangular diagram and moving it to the top of a new, $(n+1)$\textsuperscript{st} column.  See Figure \ref{fig: rectangular to stair-step comparison} for an example of this shape change.  Note that $\vert S_0(\widetilde{\lambda}) \vert$ is calculable using the Hook-Length Formula.

\begin{figure}[h!]
\centering
\begin{footnotesize}
\ydiagram{3,3,3,3}
*[\bullet]{0,0,0,2+1}
\hspace{.1in}
\raisebox{-18.5pt}{$\longrightarrow$}
\hspace{.08in}
\ydiagram{4,3,3,2}
*[\bullet]{3+1}
\end{footnotesize}
\caption{Shape Change in the Bijection of Theorem \ref{thm: enumeration, rectangular i=1} when $\lambda = (3,3,3,3)$}
\label{fig: rectangular to stair-step comparison}
\end{figure}

\begin{proof}[Proof of Theorem \ref{thm: enumeration, rectangular i=1}]
We outline two procedures that give well-defined maps \hfill \break $\phi_1: S_1(\lambda) \rightarrow S_0(\widetilde{\lambda})$, $\phi_2: S_0(\widetilde{\lambda}) \rightarrow S_1(\lambda)$, and then argue that those maps are inverses of one another.

For our map $\phi_1 : S_1(\lambda) \rightarrow S_0(\widetilde{\lambda})$, take a 1-inverted rectangular tableau $\tau \in S_1(\lambda)$.  Assume that the sole inversion pair of $\tau$ is $(a,b)$, where $a < b$, and that this inversion appears in the $k$\textsuperscript{th} column of $\tau$.  Notice that, since $\tau$ contains only one inversion, $b$ must appear directly above $a$ in the $k$\textsuperscript{th} column.  We begin our first procedure with the larger element $b$ of the inversion pair $(a,b)$.  We recursively ``bump" an increasing sequence distinguished elements $\lbrace b = c_k,c_{k+1},\hdots, c_n \rbrace$, one from each column of $\tau$ from the $k$\textsuperscript{th} column rightward, to the right by one column each.  Our procedure is as follows:

\begin{enumerate}
\item If $j<n$, $c_j$ isn't in the rightmost column of $\tau$.  Define $c_{j+1}$ to be the smallest entry $d$ in the $(j+1)$\textsuperscript{st} column of $\tau$ such that $d > c_j$.  Then ``bump" $c_j$ into the box occupied by $c_{j+1}$, temporarily producing a box in the $(j+1)$\textsuperscript{st} column that is shared by two elements.  There will then be an empty box in the $j$\textsuperscript{th} column: repeatedly fill any open boxes in the $j$\textsuperscript{th} column by sliding the smaller of the elements lying directly to the right or directly below that empty box into the box.  Do this until the open box has been moved into the $(j+1)$\textsuperscript{st} column.  Then repeat this entire procedure with $c_{j+1}$ until you reach $c_n$.
\item If $j=n$, $c_n$ lies is in the rightmost column of $\tau$.  Move $c_n$ to the top of a new $(n+1)$\textsuperscript{st} column of $\tau$.  There will be an empty box in the $n$\textsuperscript{th} column: slide all entries that are below that empty box up one slot, moving the empty box to the bottom of the column.
\end{enumerate}

An example of the procedure for $S_1(3,3,3) \hookrightarrow S_0(4,3,2)$ is shown in Figure \ref{fig: rectangular i=1 to i=0 example}.  This algorithm always transforms $\tau$ into a tableau $\widetilde{\tau}$ of shape $\widetilde{\lambda}$.  As there is a unique allowable move at each step, the resulting $\widetilde{\tau}$ is unique.  If $\tau$ begins with only a single inversion, the ``back-filling" component of the procedure from case \#1 ensures that $\widetilde{\tau}$ is a standard Young tableau.  Thus this algorithm describes a well-defined map $\phi: S_1(\lambda) \rightarrow S_0(\widetilde{\lambda})$.

\begin{figure}[h!]
\centering
\begin{small}
\ytableausetup{boxsize=1.65em}
\begin{ytableau}
1 & 2 & 6 \\
*(gray!50) 4 & 5 & 7 \\
*(gray!50) 3 & 8 & 9
\end{ytableau}
\hspace{.03in}
\raisebox{-12pt}{$\rightarrow$}
\hspace{.03in}
\begin{ytableau}
1 & 2 & 6 \\
 & 4/5 & 7 \\
3 & 8 & 9
\end{ytableau}
\hspace{.03in}
\raisebox{-12pt}{$\rightarrow$}
\hspace{.03in}
\begin{ytableau}
1 & 2 & 6 \\
3 & 4/5 & 7 \\
 & 8 & 9
\end{ytableau}
\hspace{.03in}
\raisebox{-12pt}{$\rightarrow$}
\hspace{.03in}
\begin{ytableau}
1 & 2 & 6 \\
3 & 4/5 & 7 \\
8 &  & 9
\end{ytableau}

\vspace{.15in}

\raisebox{-12pt}{$\rightarrow$}
\hspace{.03in}
\begin{ytableau}
1 & 2 & 5/6 \\
3 & 4 & 7 \\
8 &  & 9
\end{ytableau}
\hspace{.03in}
\raisebox{-12pt}{$\rightarrow$}
\hspace{.03in}
\begin{ytableau}
1 & 2 & 5/6 \\
3 & 4 & 7 \\
8 & 9 & 
\end{ytableau}
\hspace{.03in}
\raisebox{-12pt}{$\rightarrow$}
\hspace{.03in}
\begin{ytableau}
1 & 2 & 5 & 6 \\
3 & 4 & 7 \\
8 & 9 
\end{ytableau}
\end{small}
\caption{$S_1(3,3,3) \hookrightarrow S_0(4,3,2)$ Example}
\label{fig: rectangular i=1 to i=0 example}
\end{figure}

For our second map $\phi_2: S_0(\widetilde{\lambda}) \rightarrow S_1(\lambda)$, take a standard tableau $T \in S_0(\widetilde{\lambda})$ and let $c_n$ be the sole entry in the $(n+1)$\textsuperscript{st} column of $T$.  Here our goal is to define a decreasing sequence of distinguished entries $\lbrace c_n,c_{n-1},\hdots \rbrace$, one from each column of $T$ beginning with the $(n+1)$\textsuperscript{st} column, and ``reverse bump" those elements one column to the left until one of the distinguished elements becomes the larger member of an inversion pair.  Here our procedure is as follows:

\begin{enumerate}
\item Consider $c_j$, which originally lies in the $(j+1)$\textsuperscript{st} column of $T$.  There will always be an empty box in the $j$\textsuperscript{th} column of $T$.  Repeatedly fill that empty box with the largest of $c_j$, the entry directly above the box, and the entry directly to the left of the box, stopping when the empty box is moved leftward into the $(j-1)$\textsuperscript{st} column of $T$ or when $c_j$ directly fills the empty box.
\item If the empty box is moved into the $(j-1)$\textsuperscript{st} column, define $c_{j-1}$ to be the largest entry $d$ in the $j$\textsuperscript{th} column such that $d < c_j$.  Then move $c_j$ into the box occupied by $c_{j-1}$, so that the tableau temporarily has a box containing two entries.  Now repeat step \#1 with $c_{j-1}$.
\item If $c_j$ directly fills an empty box in the $j$\textsuperscript{th} column and $a$ is the entry that lies directly above $c_j$ after this insertion, create a single inversion pair $(a,c_j)$ by flipping the rows containing $a$ and $c_j$ from the $j$\textsuperscript{th} column leftward.  The most important thing to note here is that the final tableau always admits the inversion $(a,c_j)$ at this step: i.e.- $c_j$ is guaranteed to be smaller than the entry directly to the right of $a$.  This is because, when $c_j$ fills an empty box, it always must move down by at least one row.  See Figure \ref{fig: rectangular i=0 proof explanation} for additional explanation of this fact.
\end{enumerate}

\begin{figure}[h!]
\centering
\begin{ytableau}
a_1 & b_1 & d_1 \\
a_2 & b_2 & \frac{c_j}{c_{j+1}} \\
a_3 &  & d_3 \\
a_4 & b_4 & d_4
\end{ytableau}
\caption{The $S_0(n+1,n,\hdots,n,n-1) \hookrightarrow S_1(n,\hdots,n)$ procedure at the $j$\textsuperscript{th} column.  As $b_2 < c_j$, at this stage either $c_j$ fills the empty box and an inversion is possible or $a_3$ is slid right and we return to step \#1 for the next column to the left.  It is not possible for $b_2$ to be slid down, which is the only way for $c_j$ not to slide down at least one row when filling an empty box.}
\label{fig: rectangular i=0 proof explanation}
\end{figure}

An example of this procedure for $S_0(4,3,2) \hookrightarrow S_1(3,3,3)$ is shown in Figure \ref{fig: rectangular i=0 to i=1 inverse example}.  Notice that the ``front sliding" move of step \#1 ensures that the tableau remains both row and column standard, so that the final tableau only has the single inversion that is introduced in \#3.  Also note that \#3 eventually has to apply at some point in the procedure, as $c_j>1$ for all $j$, and thus that an inversion is always eventually added.  As there is a unique allowable move at each step, the resulting 1-inversion tableau $\widetilde{T}$ is unique and the algorithm represents a well-defined map $\phi_2: S_0(\widetilde{\lambda}) \hookrightarrow S_1(\lambda)$.

$\phi_2$ is constructed so that it is clearly the inverse of $\phi_1$.  In particular, it is straightforward to check that $\phi_2 \circ \phi_1 (\tau) = \tau$ and $\phi_1 \circ \phi_2 (T) = T$  This demonstrates the bijectivity of both maps and allows us to conclude that $\vert S_1(\lambda) \vert = \vert S_0(\widetilde{\lambda}) \vert$.

\begin{figure}[h!]
\centering
\begin{small}
\begin{ytableau}
1 & 2 & 4 & 6 \\
3 & 7 & 9 \\
5 & 8 
\end{ytableau}
\hspace{.03in}
\raisebox{-12pt}{$\rightarrow$}
\hspace{.03in}
\begin{ytableau}
1 & 2 & 4 & 6 \\
3 & 7 & \\
5 & 8 & 9
\end{ytableau}
\hspace{.03in}
\raisebox{-12pt}{$\rightarrow$}
\hspace{.03in}
\begin{ytableau}
1 & 2 & 4 & 6 \\
3 & & 7 \\
5 & 8 & 9
\end{ytableau}

\vspace{.15in}

\raisebox{-12pt}{$\rightarrow$}
\hspace{.03in}
\begin{ytableau}
1 & 2 & 4/6 \\
3 & & 7 \\
5 & 8 & 9
\end{ytableau}
\hspace{.03in}
\raisebox{-12pt}{$\rightarrow$}
\hspace{.03in}
\begin{ytableau}
1 & 2 & 6 \\
3 & 4^* & 7 \\
5 & 8 & 9
\end{ytableau}
\hspace{.03in}
\raisebox{-12pt}{$\rightarrow$}
\hspace{.03in}
\begin{ytableau}
3 & *(gray!50)4 & 6 \\
1 & *(gray!50)2 & 7 \\
5 & 8 & 9
\end{ytableau}
\end{small}
\caption{$S_0(4,3,2) \hookrightarrow S_1(3,3,3)$ Example}
\label{fig: rectangular i=0 to i=1 inverse example}
\end{figure}
\end{proof}

\subsection{Calculating $\vert S_1(\lambda) \vert$, Non-Rectangular Case}
\label{subsec: enumeration, general i=1}

The goal of this subsection is to prove a generalization of Theorem \ref{thm: enumeration, rectangular i=1} that enumerates 1-inverted Young tableaux for an arbitrary (non-rectangular) shape $\lambda$.  Before explicitly giving our result in Theorem \ref{thm: enumeration, general i=1}, we need to establish some new notation describing non-rectangular tableax shapes.

For a tableau of shape $\lambda = (\lambda_1,\hdots,\lambda_m)$, define $d_i = \lambda_i-\lambda_{i+1}$ for $1 \leq i < m$ and $d_m = \lambda_m$. Notice that $d_i \geq 0$ for all $i$ and that $d_i > 0$ iff the rightmost entry in the $i$\textsuperscript{th} row of our tableau is a ``lower-right corner" for our tableau.  Similarly define $\widetilde{d}_i = \lambda_{i-1}-\lambda_i$ for $1 < i \leq m$ and $\widetilde{d}_1 = \infty$.  Notice that $\widetilde{d}_i \geq 0$ for all $i$ and that $\widetilde{d}_i > 0$ iff an additional box may be added to the end of the $i$\textsuperscript{th} row of a tableau of shape $\lambda$ to produce a valid tableau shape. 

\begin{theorem}
\label{thm: enumeration, general i=1}
Let $\lambda=(\lambda_1,\hdots,\lambda_m)$ be a non-decreasing sequence of positive integers with $m \geq 1$.  Then:
\begin{center}
$\displaystyle{\vert S_1(\lambda_1,\hdots,\lambda_m) \vert = \sum_E \vert S_0(\lambda_1 + \epsilon_1,\hdots, \lambda_m + \epsilon_m) \vert}$
\end{center}

\noindent Where the summation runs over all tuples $E = (\epsilon_1,\hdots,\epsilon_m)$ such that $\epsilon_i= 0,\pm 1$ for all $i$, $\epsilon_i = -1$ for precisely one $i = i_1$ such that $i_1 >1$ and $d_{i_1}>0$, and $\epsilon_i = 1$ for precisely one $i = i_2$ such that $i_2 < i_1$ and $\widetilde{d}_{i_2}>0$.
\end{theorem}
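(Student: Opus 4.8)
The plan is to adapt the bijective argument of Theorem \ref{thm: enumeration, rectangular i=1} to an arbitrary shape. For each tuple $E=(\epsilon_1,\dots,\epsilon_m)$ appearing in the sum write $\lambda^E=(\lambda_1+\epsilon_1,\dots,\lambda_m+\epsilon_m)$; equivalently, $\lambda^E$ is obtained from $\lambda$ by deleting the box at the end of some row $i_1$ (a genuine corner, since $d_{i_1}>0$, with $i_1>1$) and appending a box at the end of a strictly higher row $i_2<i_1$ (a legal move, since $\widetilde d_{i_2}>0$). These are exactly the shapes reachable from $\lambda$ by sliding one box up to an earlier row, and each $|S_0(\lambda^E)|$ is computable by the Hook-Length Formula. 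The goal is to produce mutually inverse maps $\phi_1:S_1(\lambda)\to\bigsqcup_E S_0(\lambda^E)$ and $\phi_2:\bigsqcup_E S_0(\lambda^E)\to S_1(\lambda)$; partitioning $S_1(\lambda)$ according to the value of $E$ then yields $|S_1(\lambda)|=\sum_E|S_0(\lambda^E)|$.

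The maps are the ``cascade'' procedures of Theorem \ref{thm: enumeration, rectangular i=1}, now run on a non-rectangular diagram. For $\phi_1$, start with $\tau\in S_1(\lambda)$ whose unique inversion $(a,b)$ lies in column $k$; as before $b$ sits directly above $a$. Beginning with $b$, bump a distinguished entry rightward one column at a time, at each step resetting the distinguished entry to the smallest entry of the next column exceeding it and using an inward jeu-de-taquin slide to push the vacated box toward that column, exactly as in the rectangular case. The structural novelty is that for a non-rectangular $\lambda$ the path of the vacated box, and the column in which the cascade halts, are now governed by the corners of the diagram rather than by the last column; when the cascade halts, the last distinguished entry is deposited into a newly appended box. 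I would then prove: (a) the output is a standard Young tableau, since the jeu-de-taquin slides restore column-standardness once the inversion has been driven out; (b) its shape is $\lambda^E$ for some $E$ from the sum, the appended box lying in the row ($i_2$) occupied by the final distinguished entry and the deleted corner in the row ($i_1$) of the vacated box; and (c) $i_2<i_1$, i.e. the box always moves strictly upward — this last point I would obtain from a short monotonicity argument, the distinguished entries being increasing in value and hence weakly decreasing in row (so the appended box lies in a row no later than that of $b$), while $a<b$ forces the very first slide to push the vacated box below the row of $b$, after which it only ever moves down or right. The reverse map $\phi_2$ takes $T\in S_0(\lambda^E)$, reads the entry of the appended box, treats the cell missing from $\lambda^E$ (at the end of row $i_1$) as the initial hole, reverse-bumps that entry leftward by outward jeu-de-taquin slides — filling the hole while propagating a new hole up and to the left and peeling a decreasing sequence of distinguished entries off successive columns — and, when the bumped entry finally comes to rest directly above a smaller entry $\alpha$, flips the two rows from that column leftward to install a single inversion on $\alpha$.

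I expect the point that $\phi_2$ actually lands in $S_1(\lambda)$ — that this final flip creates exactly one inversion and no others — to be the main obstacle, just as the analogous fact (and Figure \ref{fig: rectangular i=0 proof explanation}) was the delicate part of Theorem \ref{thm: enumeration, rectangular i=1}. It is here that the full two-clause definition of a tableau inversion matters: the clause that does not constrain the vertical order of the pair is exactly what allows (and forces) the flipped pair to be counted while simultaneously ruling out extra inversions in the affected columns, and the argument turns on the bumped entry having descended at least one row when it settles. A secondary obstacle is the bookkeeping for the forward cascade on a non-rectangular diagram: showing the cascade always terminates, that the last distinguished entry's row is indeed extendable (so $\widetilde d_{i_2}>0$), that the vacated box really comes to rest at a corner (so $d_{i_1}>0$), and that the two ways the cascade can halt — running off the right edge versus the vacated box reaching a corner before the edge — are handled uniformly. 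Once $\phi_1$ and $\phi_2$ are seen to be well-defined and mutually inverse (a step-by-step check, each move of one undoing a move of the other, exactly as in the rectangular case), summing $|S_0(\lambda^E)|$ over all admissible $E$ completes the proof.
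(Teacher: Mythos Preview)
Your proposal is correct and follows essentially the same route as the paper: extend the bumping bijection of Theorem \ref{thm: enumeration, rectangular i=1} to arbitrary $\lambda$, letting the corners of the diagram govern where the cascade halts and where the vacated box comes to rest, and invert via the analogous reverse-bump with a final row-flip. One small slip in your description of $\phi_2$: when the bumped entry $c_j$ settles it lands directly \emph{below} the smaller entry (the paper's $a$), not above it---the subsequent row-flip is precisely what puts $c_j$ above $a$ and thereby installs the single inversion.
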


Observe that valid tuples $E=(\epsilon_1,\hdots,\epsilon_m)$ include all zeroes apart from a single $+1$ and a single $-1$, with the $-1$ appearing after the $+1$.  In terms of actual tableaux, the summation of Theorem \ref{thm: enumeration, general i=1} runs over all shapes that are obtained from $\lambda$ by moving a single lower-right corner (that isn't in the first row) to create a new lower-right corner that is at least one row higher than the box's original location.  See Figure \ref{fig: general shape change comparison} for an example of the relevant tableaux shapes in the summation.

\begin{figure}[h!]
\centering
\begin{tiny}
\ydiagram{4,3,2,2}
*[\bullet]{0,2+1}
*[\star]{0,0,0,1+1}
\hspace{.05in}
\raisebox{-18.5pt}{$\longrightarrow$}
\hspace{.04in}
\ydiagram{4,3,3,2}
*[\bullet]{0,2+1}
*[\star]{0,0,2+1}
\hspace{.02in}
\raisebox{-18.5pt}{\scalebox{1.5}{$+$}}
\hspace{.02in}
\ydiagram{4,4,2,1}
*[\bullet]{0,2+1}
*[\star]{0,3+1}
\hspace{.02in}
\raisebox{-18.5pt}{\scalebox{1.5}{$+$}}
\hspace{.02in}
\ydiagram{5,3,2,1}
*[\bullet]{0,2+1}
*[\star]{4+1}
\hspace{.02in}
\raisebox{-18.5pt}{\scalebox{1.5}{$+$}}
\hspace{.0in}
\ydiagram{5,2,2,2}
*[\bullet]{4+1}
*[\star]{0,0,0,1+1}
\end{tiny}
\caption{Shape Change in the Bijection of Theorem \ref{thm: enumeration, general i=1} when $\lambda = (4,3,2,2)$}
\label{fig: general shape change comparison}
\end{figure}

Also notice that when $\lambda=(n,\hdots,n)$ there is a single valid tuple $E = (1,0,\hdots,0,-1)$, and thus that the summation of Theorem \ref{thm: enumeration, general i=1} specializes to the equality of Theorem \ref{thm: enumeration, rectangular i=1} in the case of rectangular tableaux.

\begin{proof}[Proof of Theorem \ref{thm: enumeration, general i=1}]
\noindent $\mathbf{\left( \hookrightarrow \right)}$ Take $\tau \in S_1(\lambda_1,\hdots,\lambda_m)$, and assume that the sole inversion pair $(a,b)$ lies in the $k$\textsuperscript{th} column of $\tau$.  We repeatedly ``bump" a sequence of distinguished elements rightward, as in the first map $\phi_1$ from the proof of Theorem \ref{thm: enumeration, rectangular i=1}, following the exact same rules at each intermediate step.  The difference here is that we terminate the procedure (possibly before the rightmost column) as soon as a distinguished element can be placed in a valid tableau position that does not fit inside the original shape $(\lambda_1,\hdots,\lambda_m)$.  The resulting tableau shape will differ from $\lambda$ in that it will have a new lower-right corner added at the final step, and that it will have a single vacated box at the bottom of some column ($k$\textsuperscript{th} or leftward) that is created by the ``back-filling" component of the procedure.  The specific location of these boxes depends on the inverted tableau, but the procedure ensures that it will always be one of the shapes from the summation in the Theorem.  As in the proof of Theorem \ref{thm: enumeration, rectangular i=1} this is a well-defined map that always yields a standard Young tableau.

\noindent $\mathbf{\left( \hookleftarrow \right)}$ Now take $\tau \in S_0(\widetilde{\lambda})$, where $\widetilde{\lambda}$ is any one of the shapes from the summation in the Theorem, and consider the entry in the sole box (always one of the lower-right corners of $\widetilde{\lambda}$) that does not fit inside the shape $\lambda$.  As in the second map $\phi_2$ from the proof of Theorem \ref{thm: enumeration, rectangular i=1}, we recursively ``back-bump" a sequence of distinguished elements one column leftward, following the exact same rules as in that earlier procedure.  Now the difference is that we are only allowed to slide entries into boxes that are contained in $\lambda$, even if that spot is a valid tableau position.  For the same reasons as described in the proof of Theorem \ref{thm: enumeration, rectangular i=1}, this map is well-defined and produces a 1-inversion tableau of the desired shape.  This map is also clearly the inverse of the map defined above, so if you define a piecewise map by ranging over all valid shapes $\widetilde{\lambda}$ the result is a bijection.
\end{proof}

\section{Enumerating $i$-Inverted Young Tableaux, $i > 1$}
\label{sec: enumeration i>1}

In Section \ref{sec: enumeration i=1} we developed bijections between 1-inverted rectangular tableaux of an arbitrary shape $\lambda$ and 0-inverted tableaux of some related collection of shapes $\lbrace \widetilde{\lambda}_i \rbrace$.  One may ask whether the methods of that section readily extend to higher numbers?  More specifically, does there exist a clear relationship between the number of $i$-inverted tableaux of shape $\lambda$ and the numbers of $(i-j)$-inverted tableaux of shapes $\lbrace \widetilde{\lambda}_i \rbrace$ for some integer $j>0$?

In Appendix \ref{sec: inversion tables}, we present a full comparison of $\vert S_i \vert$ for several choices of three-row rectangular shapes $\lambda=(n,n,n)$ and the associated ``stair-step" shapes $\widetilde{\lambda}=(n+1,n,n-1)$.  As predicted by Theorem \ref{thm: enumeration, rectangular i=1}, we have $\vert S_1(\lambda) \vert = \vert S_0(\widetilde{\lambda}) \vert$.  Sadly, even in our specialization to rectangular $\lambda$, Theorem \ref{thm: enumeration, rectangular i=1} does not extend to a bijection between $\vert S_i(\lambda) \vert$ and $\vert S_{i-1}(\widetilde{\lambda}) \vert$ for all $i > 1$.

What those tables do reveal are discernible patterns in $\vert S_i(\lambda) \vert$ for sufficiently high choices of $i$.  In addition to $\vert S_{M_\lambda}(\lambda) \vert = 1$ for the maximum inversion number $M_\lambda$ (see Proposition \ref{thm: maximal inversion number}), notice that $\vert S_{M_\lambda - 1}(\lambda) \vert = 3n -1$ and $\vert S_{M_\lambda - 2}(\lambda) \vert = \frac{(3n-2)(3n+1)}{2}$ for all $n \geq 1$.  Also notice that $\vert S_i(\lambda) \vert = \vert S_{i-2}(\widetilde{\lambda}) \vert$ for all values of $i$ beyond a certain point.  We henceforth refer to the region where this latter equality holds as the ``tail-end" of the inversion table.  For our three-row rectangular tableaux with $\lambda = (n,n,n)$, this tail-end consistently begins at $i = n+1$.

In this section we prove formulas for $\vert S_{M_\lambda - 1}(\lambda) \vert$ and $\vert S_{M_\lambda - 2}(\lambda) \vert$ in the rectangular case, generalizing our observations about the three-row tableaux in Appendix \ref{sec: inversion tables}.  We then prove a special case of the ``tail-end" bijection and conjecture as to how this result extends to the general rectangular case.  For this entire section, we restrict our attention to rectangular tableaux, although the authors suspect that there may exist generalizations to non-rectangular tableaux that mirror Subsection \ref{subsec: enumeration, general i=1}.

\subsection{Calculating $\vert S_{M_\lambda - 1}(\lambda) \vert$ and $\vert S_{M_\lambda - 2}(\lambda) \vert$, Rectangular Case}
\label{subsec: M-1,M-2}

Directly enumerating inverted tableaux becomes increasingly tractable as one approaches the maximum inversion number $M_\lambda$.  This is because having large numbers of inversions in a fixed column increases the likelihood that the tableau splits after that column.  In the $(M_\lambda - 1)$- and $(M_\lambda - 2)$-inversion cases, non-split tableaux are so rare that they may be easily enumerated.  As split tableaux are easily counted, this allows for relatively straightforward closed formulas for $\vert S_{M_\lambda - 1}(\lambda) \vert$ and $\vert S_{M_\lambda - 2}(\lambda) \vert$.

\begin{theorem}
\label{thm: M-1 rectangular}
Take $n \geq 1$ and $m \geq 2$, and consider the $m$-row shape $\lambda = (n,\hdots, n)$.  If $M_\lambda = n T_{m-1}$ is the maximum inversion number, then $\vert S_{M_\lambda - 1}(\lambda) \vert = mn-1$.
\end{theorem}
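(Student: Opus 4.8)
The plan is to exploit the structural rigidity that sets in near the maximum inversion number. Let $\lambda = (n,\ldots,n)$ with $m$ rows, and recall from Proposition~\ref{thm: maximal inversion number} that the unique tableau realizing $M_\lambda = n T_{m-1}$ inversions has each column in the ``reverse'' ordering relative to its rightward neighbor, with every entry of a column strictly smaller than every entry of the columns to its right (so the maximal tableau is maximally split). First I would quantify how ``close'' an arbitrary $\tau \in S_{M_\lambda - 1}(\lambda)$ must be to this maximal tableau. The key observation is that each column of height $m$ contributes at most $T_{m-1}$ inversions, and the only way a column fails to achieve this internal maximum is a single adjacent transposition within that column; likewise the only way a pair of adjacent columns fails to ``cross completely'' is that exactly one entry of the left column exceeds exactly one entry of the right column. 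Since we are only one inversion below the total maximum $\sum_j T_{h_j - 1} = n T_{m-1}$, exactly one of these ``defects'' can occur, and it must cost exactly one inversion.

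The main step is therefore a case analysis classifying the unique defect. Case (a): all columns cross completely (so the tableau is fully split into $n$ single columns, each reading a permutation of a fixed consecutive block), and exactly one column has a single adjacent transposition from its reverse ordering — this contributes $T_{m-1} - 1$ inversions in that column. There are $n$ choices of which column carries the defect and $m-1$ choices of which adjacent pair of rows is transposed, giving $n(m-1)$ tableaux. Case (b): every column is internally in reverse order (contributing $T_{m-1}$ each), but one adjacent pair of columns fails to cross completely — here I need to check that the minimal such failure (one inversion lost across the column boundary) forces a specific local configuration, and count the number of ways it can arise; I expect this to contribute $n-1$ tableaux (one for each of the $n-1$ adjacent column-pairs, with the ``almost-crossing'' configuration rigid once the pair is chosen). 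Adding the cases gives $n(m-1) + (n-1) = mn - 1$, as claimed. Throughout I would lean on the splitting dichotomy already used in Section~\ref{subsec: enumerating two-row inverted tableaux}: a column that is internally reverse-ordered and crosses its neighbor forces a split there, which severely constrains what the rest of the tableau can look like.

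The hard part will be Case (b): making precise what ``one adjacent column-pair fails to cross completely, losing exactly one inversion'' means combinatorially, and verifying that this configuration is compatible with row-standardness and with all \emph{other} columns being fully reverse-ordered and fully crossing. In particular I must rule out that a near-crossing between columns $j$ and $j+1$ secretly forces extra inversions (or a loss of the internal maximum) in column $j$ or $j+1$ or across the $j\!-\!1,j$ boundary — i.e.\ that the single-inversion ``budget'' is genuinely spent in exactly one place. I would handle this by writing the entries of the two offending columns explicitly in terms of which entry of the left column is the unique one exceeding which entry of the right column, checking the row-standard inequalities force this to be the top entry of the left column versus the bottom entry of the right column (or whichever extremal position makes it cost exactly one inversion), and confirming the count is exactly $n-1$. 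A secondary subtlety is confirming Cases (a) and (b) are genuinely disjoint and exhaustive — that a tableau cannot simultaneously have a within-column transposition \emph{and} an incomplete crossing without exceeding the one-inversion deficit — which follows because each defect independently costs at least one inversion.
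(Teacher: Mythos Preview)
Your overall strategy---identifying the unique ``defective'' column with $T_{m-1}-1$ inversions and then splitting into cases---is sound and genuinely different from the paper's proof, which proceeds by induction on $n$ (peeling off the leftmost column and using the Mahonian identity $M(m-1,T_{m-1}-1)=m-1$ as a base case). Your direct global count is arguably cleaner, and the target decomposition $n(m-1)+(n-1)=mn-1$ is correct.

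However, your framing of Case~(b) is incoherent as written. You describe it as ``every column is internally in reverse order (contributing $T_{m-1}$ each), but one adjacent pair of columns fails to cross completely.'' This cannot happen: Fresse inversions are computed \emph{entirely} within columns (each pair counted or not according to its right neighbors), so if every column truly contributes $T_{m-1}$ inversions then the total is $M_\lambda$, not $M_\lambda-1$, regardless of whether adjacent columns ``cross.'' An incomplete crossing is not an independent defect that costs an inversion; rather, the implication runs the other way---a column achieving the full $T_{m-1}$ Fresse inversions \emph{forces} a split after it (the largest left-column entry sits in the same row as the smallest right-column entry, and row-standardness does the rest). So your Cases (a) and (b) are not two independent one-inversion defects, and your disjointness/exhaustiveness argument (``each defect independently costs at least one inversion'') is wrong.

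The fix is to reframe the dichotomy around the single defective column $k$: either the tableau splits after $k$ (your Case~(a), giving $m-1$ arrangements for each of the $n$ choices of $k$) or it does not (the real Case~(b), possible only for $k<n$). In the non-split case, relabeling rows so that column $k{+}1$ is increasing, column $k$ must have exactly one concordant pair with it; row-standardness plus the non-split condition then pins down a total order on all $2m$ entries of columns $k,k{+}1$, yielding exactly one tableau per $k\in\{1,\ldots,n-1\}$. That recovers your $n-1$, but for the right reason. The paper's inductive proof reaches the same $m$ new tableaux per added column via a different subdivision (whether $a_m$ or $a_{m-1}$ sits atop the leftmost column), which is less transparent here but generalizes more readily to the $M_\lambda-2$ case.
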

\begin{proof}
Fix any $m \geq 2$.  With our choice of $m$ implicit, temporarily denote $S_i(n) = S_i(n,\hdots,n)$ and $M_n = M_{(n,\hdots,n)} = nT_{m-1}$.  Thus we are looking to show that $\vert S_{M_n -1}(n) \vert = mn-1$.  We proceed by induction on $n \geq 1$, with the base case of $\vert S_{M_1-1}(1)  \vert = m-1$ following from Subsection \ref{subsec: enumerating one-column inverted tableaux} and the Mahonian number identity $M(m-1,T_{m-1}-1) = m-1$.

For the inductive case, we think about isolating the leftmost column in an $(M_n -1)$-inverted tableau of size $m \times n$.  To obtain the required number of inversions, one of the following must hold:
\begin{enumerate}
\item The leftmost column has precisely $M_1$ inversions and the remaining $(n-1)$ columns have a total of precisely $M_{(n-1)} - 1$ inversions.
\item The leftmost column has precisely $M_1 -1$ inversions and the remaining $(n-1)$ columns have a total of precisely $M_{(n-1)}$ inversions.
\end{enumerate}

In case \#1, there is a unique ordering of the elements in the leftmost column (relative to the column immediately on its right) and the tableau necessarily splits after the first column.  It follows that the number of tableaux satisfying this case equals $\vert S_{M_{(n-1)}-1}(n-1) \vert$.

In case \#2, there are two sub-cases depending upon whether the largest entry $a_m$ or the second largest entry $a_{m-1}$ in the leftmost column is directly to the left of the smallest entry $b_1$ in the column immediately on its right.  For an illustration of these two sub-cases when $m=4$, see Figure \ref{fig: M-1 example} below.

In the first sub-case, the tableau must split after the first column, so that the first column must contain the entries $1,2,\hdots,m$.  Since there is a unique way to produce $M_{(n-1)}$ inversions across the latter columns, the placement of $m+1,m+2,\hdots,mn$ is predetermined.  In the first column, there are $m-1$ inversions involving $a_m$ and we must have $T_{m-1} - 1 - (m-1) = T_{m-2} - 1$ inversions among the remaining $m-1$ entries in that column.  There are $M(m-2,T_{m-2}-1) = m-2$ distinct arrangements that produce these $T_{m-2}-1$ inversions.  This means that there are precisely $m-2$ inverted tableaux satisfying this sub-case.

In the second sub-case, the tableau need not split after the first column, as the largest entry $a_m$ in the leftmost column may or may not be greater than the smallest entry $b_1$ in the column directly to its right.  No matter the relationship between these two entries, there is a unique arrangement that produces the requisite $M_{(n-1)}$ inversions across the latter columns.  In the first column, there are always $m-2$ inversions involving the top entry $a_{m-1}$, leaving $T_{m-2}$ inversions among the remaining $m-1$ entries in the column.  There is only $M(m-2,T_{m-2})=1$ arrangement of the remaining entries in the leftmost column that produces these $T_{m-2}$ inversions.  Also notice that the required arrangement in the leftmost column ensures that $a_m$ is directly to the left of the second-smallest entry $b_2$ of the next column, and that the tableau always splits after the second column.  This leaves the relationship between $a_m$ and $b_1$ as the only open question about such a tableau.  It follows that there are precisely $2$ inverted tableaux in this sub-case (one for $a_m < b_1$, one for $a_m > b_1$).  

Collecting results from all sub-cases and then applying the inductive assumption gives the desired result: $\vert S_{M_n}(n) \vert = \vert S_{M_{n-1}}(n-1) \vert + (m-2) + 2= (m(n-1) - 1) + (m - 2) + 2 = mn - 1$.
\end{proof}

\begin{figure}[h!]
\centering
\begin{ytableau}
a_4 & b_1 \\
a_3 & b_2 \\
a_2 & b_3 \\
a_1 & b_4 
\end{ytableau}
\hspace{.7in}
\begin{ytableau}
a_4 & b_1 \\
*(gray!50) & b_2 \\
*(gray!50)(2) & b_3 \\
*(gray!50) & b_4 
\end{ytableau}
\hspace{.2in}
\begin{ytableau}
a_3 & b_1 \\
a_4 & b_2 \\
a_2 & b_3 \\
a_1 & b_4
\end{ytableau}

\caption{The leftmost columns of an $(M_\lambda -1)$-inverted 4-row tableau, with subscripts denoting the relative ordering within each column (we've reordered the second column so it appears increasing).  Case \#1 from the proof of Theorem \ref{thm: M-1 rectangular} is on the left, whereas the two sub-cases of case \#2 from the proof are on the right.  Parentheses denote number of inversions required in shaded sub-tableau.}
\label{fig: M-1 example}
\end{figure}

\begin{theorem}
\label{thm: M-2 rectangular}
Take $n \geq 1$ and $m \geq 3$, and consider the $m$-row shape $\lambda = (n,\hdots, n)$.  If $M_\lambda = n T_{m-1}$ is the maximum inversion number, then $\vert S_{M_\lambda - 2}(\lambda) \vert = \displaystyle{\frac{(mn-2)(mn+1)}{2}}$.
\end{theorem}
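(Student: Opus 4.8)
The plan is to induct on $n$, with $m \ge 3$ fixed, peeling off the leftmost column exactly as in the proof of Theorem \ref{thm: M-1 rectangular}. Write $T = T_{m-1} = \binom{m}{2}$, $M_n = nT$, and $S_i(n) = S_i(n,\dots,n)$. For the base case $n = 1$ the shape is a single column, so $|S_{T-2}(1)| = M(m-1,T-2)$; since the Mahonian polynomial of Equation \ref{eq: Mahonian numbers} is palindromic of degree $T$, this equals $M(m-1,2) = (m-2) + \binom{m-1}{2} = \binom{m}{2} - 1 = \frac{(m-2)(m+1)}{2}$, which is exactly $\frac{(m\cdot 1 - 2)(m\cdot 1 + 1)}{2}$.

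For the inductive step, take $\tau \in S_{M_n - 2}(n)$ and write $M_n - 2 = c + r$, where $c$ is the number of inversion pairs lying inside the leftmost column (a quantity determined jointly by columns $1$ and $2$) and $r$ is the number lying inside columns $2,\dots,n$. Since $0 \le c \le T$ and $0 \le r \le M_{n-1}$, the only possibilities are $(c,r) \in \{(T,\,M_{n-1}-2),\ (T-1,\,M_{n-1}-1),\ (T-2,\,M_{n-1})\}$. If $c = T$, then column $1$ is the exact reverse of column $2$ as an ordered column, and because $\tau$ is row-standard the largest entry of column $1$ is then smaller than the smallest entry of column $2$; so $\tau$ splits after column $1$, forcing column $1 = \{1,\dots,m\}$, letting columns $2,\dots,n$ be an arbitrary member of $S_{M_{n-1}-2}(n-1)$ (after re-indexing), and forcing the order of column $1$. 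Hence this case contributes $|S_{M_{n-1}-2}(n-1)|$, evaluated by the inductive hypothesis. For the two remaining values of $(c,r)$ I would separate a ``split'' from a ``non-split'' sub-case according to whether $\tau$ splits after column $1$. In the split sub-cases column $1 = \{1,\dots,m\}$ once more, columns $2,\dots,n$ range over $S_{M_{n-1}-1}(n-1)$ (of size $m(n-1)-1$ by Theorem \ref{thm: M-1 rectangular}) or over $S_{M_{n-1}}(n-1)$ (of size $1$ by Proposition \ref{thm: maximal inversion number}), and for each such choice the number of admissible orders of column $1$ is $M(m-1,T-1) = m-1$ and $M(m-1,T-2) = \binom{m}{2}-1$ respectively; these sub-cases contribute $(m-1)(m(n-1)-1)$ and $\binom{m}{2}-1$.

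The non-split sub-cases are the crux, and I expect them to be the main obstacle. There column $1$ must contain an entry exceeding the smallest entry of column $2$, yet the value $1$ --- and in fact almost all of the small entries --- must still lie in column $1$, and row-standardness forces the lone ``large'' entry of column $1$ to exceed the smallest entry of column $2$ by only a bounded amount; so only a controlled family of local configurations in columns $1$ and $2$ can occur, which one enumerates directly. The genuinely delicate case is $(c,r) = (T-1,M_{n-1}-1)$, since there columns $2,\dots,n$ form only a member of $S_{M_{n-1}-1}(n-1)$ and so neither the entry set nor the internal order of column $2$ is pinned down; this forces one to invoke the explicit structure of those $m(n-1)-1$ near-maximal tableaux coming out of the proof of Theorem \ref{thm: M-1 rectangular}. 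A somewhat intricate count should show that the two non-split sub-cases contribute $mn$ altogether --- and, in contrast to the constant non-split contributions in the proof of Theorem \ref{thm: M-1 rectangular}, this quantity depends on $n$. Granting this, the theorem follows by summing the five contributions and checking the polynomial identity $\frac{(m(n-1)-2)(m(n-1)+1)}{2} + (m-1)(m(n-1)-1) + \big(\binom{m}{2}-1\big) + mn = \frac{(mn-2)(mn+1)}{2}$, which is routine.
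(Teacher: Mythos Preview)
Your inductive framework, the three-way split by $(c,r)$, the base case, and the split-case counts $(m-1)\bigl(m(n-1)-1\bigr)$ and $\binom{m}{2}-1$ all match the paper and are correct; your claimed non-split total $mn$ is also correct, and the final polynomial identity checks.  The genuine gap is that you do not actually carry out the non-split count, and you overestimate how delicate it is.

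The paper closes this gap by decomposing the cases $c=T-1$ and $c=T-2$ not by split versus non-split, but by asking \emph{which} column-1 element sits to the left of $b_1$ (and, for $c=T-2$, also $b_2$).  For $c=T-1$ only $a_m$ or $a_{m-1}$ can be left of $b_1$; the first forces a split, while the second pins the column-1 arrangement completely and leaves only the binary choice $a_m\lessgtr b_1$, giving exactly two tableaux per pattern in $S_{M_{n-1}-1}(n-1)$.  No ``explicit structure'' of those $m(n-1)-1$ patterns is needed; the only fact used (implicitly, via $m\ge 3$) is that in any such pattern the entry $2$ lies in its first column, so that $b_1$ and $b_2$ are consecutive integers once $a_m$ is removed.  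For $c=T-2$ the remaining $n-1$ columns are the unique maximal tableau, and the paper enumerates four sub-cases according to the positions of $a_m,a_{m-1},a_{m-2}$ relative to $b_1,b_2$, obtaining the explicit contributions $\frac{m(m-3)}{2}$, $2(m-3)$, $3$, $3$, which sum to $\frac{m(m+1)}{2}$.  Splitting these back into split and non-split parts gives $\binom{m}{2}-1$ and $m+1$, so the total non-split contribution is $\bigl(m(n-1)-1\bigr)+(m+1)=mn$, as you asserted.  The moral: your split/non-split dichotomy is workable, but the paper's ``who is left of $b_1$'' refinement is what makes the non-split configurations finite and explicit rather than parametrized by an unknown right-hand pattern.
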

\begin{proof}
This requires a more complicated variation on the method from the proof of Theorem \ref{thm: M-2 rectangular}.  Once again we fix $m \geq 3$ and temporarily denote $S_i(n) = S_i(n,\hdots,n)$, $M_n = M_{(n,\hdots,n)} = nT_{m-1}$.  In this case we are looking to show that $\vert S_{M_n-2}(n) \vert = \frac{(mn-2)(mn+1)}{2}$.  We proceed by induction on $n \geq 1$.

For the base case $\vert S_{M_n-2}(1) \vert$, one may directly verify the Mahonian number $M(m-1,T_{m-1}-2) = \frac{(m-2)(m+1)}{2}$. For the inductive case we once again isolate the leftmost column in an $(M_n - 2)$-inverted tableau of size $m \times n$.  We now have three ways to obtain the required number of inversions:

\begin{enumerate}
\item The leftmost column has precisely $T_m = M_1$ inversions and the remaining $(n-1)$ columns have precisely $M_{(n-1)}-2$ inversions.
\item The leftmost column has precisely $T_m - 1 = M_1 - 1$ inversions and the remaining $(n-1)$ columns have precisely $M_{(n-1)}-1$ inversions.
\item The leftmost column has precisely $T_m = M_1 - 2$ inversions and the remaining $(n-1)$ columns have precisely $M_{(n-1)}$ inversions.
\end{enumerate}

In case \#1, there is a unique ordering of the leftmost column to obtain the maximal number of $M_1$ inversions and the tableau necessarily splits after the first column.  It follows that the number of inverted tableaux in this case equals $\vert S_{M_{(n-1)}-2}(n-1) \vert$.

In case \#2, there are two sub-cases depending upon whether the largest entry $a_m$ or the second largest entry $a_{m-1}$ in the leftmost column is directly left of the smallest entry $b_1$ in the second column.  For an illustration of these sub-cases when $m = 4$, see Figure \ref{fig: M-2 example}.

In the first sub-case of case \#2, the tableau must split after the first column.  In the first column, $a_m$ participates in $m-1$ inversions and thus the remaining elements must account for $T_{m-1} - 1 - (m - 1) = T_{m-2} - 1$ total inversions.  By Theorem \ref{thm: M-1 rectangular}, there are $m - 2$ ways to achieve these $T_{m-2} - 1$ inversions.  It follows that there are precisely $(m-2) \vert S_{M_{(n-1)}-1}(n-1) \vert$ tableaux in this sub-case.

In the second sub-case of case \#2, the tableau needn't split after the first column.  In the first column, $a_{m-1}$ accounts for $m-2$ inversions and hence the remaining elements in that column must account for $T_{m-2}$ inversions.  This implies that the rest of the first column must be ``fully inverted", and thus that $a_m$ is directly to the left of the second-smallest entry $b_2$ in the second column.  We then have $a_{m-1} < b_1$ and $a_m < b_2$, leaving the relationship between $a_m$ and $b_1$ as the only question about the first two columns.  Also noticing that $b_1$ is directly to the left of either $c_m$ or $c_{m-1}$ in the third column, as $m \geq 3$ we know that $a_m < b_2 < c_2 \leq c_{m-1}/c_m$.  This means we needn't worry about the relationship between $a_m$ and any elements of the third column.  For each possible arrangement of the tableau's final $n-1$ columns, it follows that there are precisely two inverted tableaux fitting this sub-case (one for $a_m < b_1$, one for $a_m > b_1$).  Thus there are precisely $2 \vert S_{M_{(n-1)}-1}(n-1) \vert$ tableaux in this sub-case.

In case \#3 there are four sub-cases, each depending upon which elements in the first column are directly to the left of the smallest two entries $b_1,b_2$ in the second column.  Once again, reference Figure \ref{fig: M-2 example} for an illustration of these sub-cases in the situation where $m=4$.  Notice that in all of these sub-cases, the $M_{(n-1)}$ inversions needed over the final $n-1$ columns ensures that there is precisely one arrangement of those columns for each arrangement of the first column.  It also guarantees that any such tableau must split after the second column.

In the first sub-case of case \#3, $a_m$ is directly to the left of $b_1$ and the tableau must split after the first column.  In the first column, $a_m$ participates in $m-1$ inversions and thus the remaining elements account for $T_{m-1}-2-(m-1) = T_{m-2}-2$ total inversions.  By the base case of this theorem, there are $\frac{(m-1-2)(m-1+1)}{2} = \frac{(m-3)m}{2}$ ways to achieve these inversions.  Thus there are exactly $\frac{(m-3)m}{2}$ tableaux in this sub-case.

In the second sub-case of case \#3, $a_{m-1}$ is directly to the left of $b_1$ and $a_m$ is directly to the left of $b_2$.  As $a_{m-1}$ is involved in $m-2$ inversions and $a_m$ is involved in $m-2$ inversions, this leaves $T_{m-1} - 2 - (m-2) - (m-2) = T_{m-3} - 1$ inversions required across the remaining $m-2$ elements of that column.  By Theorem \ref{thm: M-1 rectangular}, there are precisely $(m-2) - 1$ distinct ways to accomplish these inversions.  For each of these $m-3$ arrangements, there is a single open choice as to the relationship between $a_m$ and $b_1$, seeing as $a_m$ is to the left of $b_2$ and $a_{m-1}$ is to the left of $b_1$.  Thus there are a total of $2 (m-3)$ tableaux in this sub-case.

In the third sub-case of case \#3, $a_{m-1}$ still lies directly to the left of $b_1$ but now $a_{m-2}$ lies directly to the left of $b_2$.  In this situation, $a_{m-1}$ is involved in $m-2$ inversions and $a_m$ is involved in $m-3$ inversions, which leaves $T_{m-1} - 2 - (m-2) - (m-3) = T_{m-3}$ inversions to be accounted for over the remaining $m-2$ entries of the first column.  There is precisely one way to account for these remaining inversions, with $a_m$ necessarily appearing directly to the left of $b_3$.  With $a_{m-1}$ to the left of $b_1$ and $a_m$ to the left of $b_3$, the relationship of $a_m$ to both $b_1$ and $b_2$ is an open question.  Here the possible orderings are $a_m < b_1 < b_2$, $b_1 < a_m < b_2$, and $b_1 < b_2 < a_m$.  As this is the only undetermined aspect of these tableaux, there are precisely $3$ tableaux in this sub-case.

In the final sub-case of case \#3, $a_{m-2}$ lies directly to the left of $b_1$.  In the first column, $a_{m-2}$ is involved in $m-3$ inversions, leaving $T_{m-1} - 2 - (m-3) = T_{m-2}$ inversions for the remaining $m-1$ entries of that column.  There is precisely one arrangement of those entries that gives $T_{m-2}$ inversions, but in this situation we still need to determine the relationship of both $a_m$ and $a_{m-1}$ to $b_1$.  Here there are possible orderings $a_{m-1} < a_m < b_1$, $a_{m-1} < b_1 < a_m$, and $b_1 < a_{m-1} < a_m$.  As this is the only undetermined aspect of these tableaux, there are precisely $3$ tableaux in this sub-case.

Collecting all sub-cases and citing Theorem \ref{thm: M-1 rectangular} along with the inductive assumption gives:

\begin{center}
$\vert S_{M_{(n-1)}-2}(n-1) \vert + (m-2) \vert S_{M_{(n-1)}-1}(n-1) \vert + 2 \vert S_{M_{(n-1)}-1}(n-1)\vert + \frac{1}{2}(m-3)m + 2(m-3) + 6$

\vspace{.1in}

$\displaystyle{= \hspace{.1in} \frac{(m(n-1)-2)(m(n-1)+1)}{2} + m(m(n-1) - 1) + \frac{m(m+1)}{2} \hspace{.1in}=}$ 

\vspace{.1in}

$\displaystyle{\frac{m^2 n^2 -m^2 n + mn - m^2 n + m^2 - m - 2mn + 2m -2}{2} + \frac{2m^2 n - 2m^2 - 2m}{2} + \frac{m^2 + m}{2}}$ 

\vspace{.1in}

$\displaystyle{= \hspace{.1in}\frac{m^2 n^2 - mn - 2}{2} \hspace{.1in}= \hspace{.1in}\frac{(mn-2)(mn+1)}{2}}$
\end{center}
\end{proof}

\begin{figure}[h!]
\centering
\begin{ytableau}
a_4 & b_1 \\
a_3 & b_2 \\
a_2 & b_3 \\
a_1 & b_4 
\end{ytableau}
\hspace{.4in}
\begin{ytableau}
a_4 & b_1 \\
*(gray!50) & b_2 \\
*(gray!50)(2) & b_3 \\
*(gray!50) & b_4 
\end{ytableau}
\hspace{.2in}
\begin{ytableau}
a_3 & b_1 \\
a_4 & b_2 \\
a_2 & b_3 \\
a_1 & b_4
\end{ytableau}
\hspace{.4in}
\begin{ytableau}
a_4 & b_1 \\
*(gray!50) & b_2 \\
*(gray!50) (1)& b_3 \\
*(gray!50) & b_4
\end{ytableau}
\hspace{.2in}
\begin{ytableau}
a_3 & b_1 \\
a_4 & b_2 \\
a_1 & b_3 \\
a_2 & b_4
\end{ytableau}
\hspace{.2in}
\begin{ytableau}
a_3 & b_1 \\
a_2 & b_2 \\
a_4 & b_3 \\
a_1 & b_4
\end{ytableau}
\hspace{.2in}
\begin{ytableau}
a_2 & b_1 \\
a_4 & b_2 \\
a_3 & b_3 \\
a_1 & b_4
\end{ytableau}
\caption{The leftmost columns of an $(M_\lambda - 2)$-inverted 4-row tableau, with subscripts denoting relative ordering within each column.  Case \#1 from the proof of Theorem \ref{thm: M-2 rectangular} is on the left, the two sub-cases of case \#2 are in the middle, and the four sub-cases of case \#3 are on the right.  Parentheses denote number of inversions required in shaded sub-tableaux.}
\label{fig: M-2 example}
\end{figure}

Notice that the $m \geq 3$ condition of Theorem \ref{thm: M-2 rectangular} (as opposed to the $m \geq 2$ condition of Theorem \ref{thm: M-1 rectangular}) sidesteps the semantic difficulties arising from the fact that a column in a two-row tableau can have at most $1$ inversions, which implies that $M_1 - 2$ would need to be negative.  The fact that $m \geq 3$ was also required in the appeal to which entry $c_i$ lay directly to the right of $b_1$ in case \#2.  This precluded the situation where $a_m$ may be smaller than elements of the third columns, which would have necessitated an even more sophisticated consideration of sub-cases.

In theory, the technique of Theorems \ref{thm: M-1 rectangular} and \ref{thm: M-2 rectangular} could be extended to the $M_\lambda - i$ inversion case is we require rectangular tableaux with at least $m = i + 1$ columns.  However, the massive number of sub-cases required would make the proof of even the $M_\lambda - 3$ case extremely daunting.

\subsection{Calculating $\vert S_i(\lambda) \vert$, ``Tail-End"}
\label{subsec: enumeration tail-end}

Consider the three-row rectangular shape $\lambda = (n,n,n)$ and let $\widetilde{\lambda} = (n+1,n,n-1)$ be the associated stair-step shape, as in the proofs of Section \ref{sec: enumeration i=1}.  Appendix \ref{sec: inversion tables} reveals that $\vert S_i(\lambda) \vert =\vert S_{i-2}(\widetilde{\lambda}) \vert$ when $i > n$.  Since Proposition \ref{thm: maximal inversion number} gives $M_{\widetilde{\lambda}} = M_\lambda - 2$, it follows that the inversion tables for $\lambda$ and $\widetilde{\lambda}$ can be matched ``from the bottom up" until we reach $i=n$.  Once again, we refer to this range as the ``tail-end" of the inversion table for $\lambda$.  In this Subsection, we take steps towards generalized proof of this ``tail-end" phenomenon.

So consider the $m$-row rectangular shape $\lambda = (n,\hdots,n)$ and the associated stair-step shape $\widetilde{\lambda} = (n+1,n,\hdots,n,n-1)$.  By Proposition \ref{thm: maximal inversion number} we have $M_{\widetilde{\lambda}} = M_\lambda - (m-1)$.  Thus one would expect an equality $\vert S_i(\lambda) \vert = \vert S_{i-m+1}(\widetilde{\lambda}) \vert$ for ``sufficiently large" $i$.  Predicting what constitutes ``sufficiently large" $i$ in this generalized tail-end is far less obvious.  To find appropriate bounds on $i$, as well as to develop a general method of proof, we begin with a consideration of one-column tableaux:

\begin{lemma}
\label{thm: tail-end lemma}
Let $m \geq 2$, and consider the m-row column tableaux shape $\lambda = (1,\hdots,1)$.  For the (m-1)-row ``hook" shape $\widetilde{\lambda} = (2,1,\hdots,1)$, we have $\vert S_i(\lambda) \vert = \vert S_{i-m+1}(\widetilde{\lambda})\vert$ for all $i > T_{m-2}$, where $T_{m-2}$ is the triangular number.  Moreover, there exists a bijection $\phi: S_i(\lambda) \rightarrow S_{i-m+1}(\widetilde{\lambda})$ such that, if the top entry of $\tau \in S_i(\lambda)$ is $k$, then the sole entry in the second column of $\phi(\tau)$ is also $k$.
\end{lemma}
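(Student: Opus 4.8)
The plan is to reduce both sides to ordinary permutations, stratify by the distinguished value $k$, and then pass between the two families by a single elementary operation on inversion tables.

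First I would set up the reduction. Reading a one-column tableau of shape $\lambda=(1,\dots,1)$ from top to bottom identifies $S_i(\lambda)$ with the set of $\sigma\in S_m$ having $\inv(\sigma)=i$ in the usual sense, the top entry being $\sigma(1)$. For the hook $\widetilde{\lambda}=(2,1,\dots,1)$, the only column of length greater than one is the first, of length $m-1$; a direct check of the inversion definition shows that a pair of entries in that column forms an inversion of $\widetilde{\tau}$ exactly when the one lying higher is the larger of the two (case~2 of the definition cannot apply, since only the top box of the column has a neighbor to its right), equivalently when it is an inversion of the word obtained by reading the first column downward, while the corner box creates no inversions and row-standardness only forces the top-left entry to be smaller than the corner entry. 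Hence $S_{i'}(\widetilde{\lambda})$ is identified with the set of pairs $(c,b)$, where $c\in\{1,\dots,m\}$ is the corner entry and $b$ is a word on $\{1,\dots,m\}\setminus\{c\}$ whose first letter is less than $c$ and with $\inv(b)=i'$.

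Next I would stratify by $k$. If $\tau\in S_i(\lambda)$ has top entry $k$, deleting that entry leaves a word $w$ on $\{1,\dots,m\}\setminus\{k\}$, and since $k$ lies above each of $1,\dots,k-1$ and below nothing, this costs exactly $k-1$ inversions, so $\inv(w)=i-k+1$; likewise an element of $S_{i-m+1}(\widetilde{\lambda})$ with corner $k$ is exactly a word $b$ on $\{1,\dots,m\}\setminus\{k\}$ with first letter $<k$ and $\inv(b)=i-m+1$. Thus it suffices to build, for each $k$, a bijection between words $w$ on $\{1,\dots,m\}\setminus\{k\}$ with $\inv(w)=i-k+1$ and words $b$ on the same alphabet with first letter $<k$ and $\inv(b)=i-m+1$; reattaching $k$ (as the top box, resp. as the corner box) then assembles these into a map $\phi$ that by construction sends top entry $k$ to corner entry $k$. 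To construct the local bijection, relabel the alphabet order-isomorphically by $\{1,\dots,m-1\}$, so words become permutations in $S_{m-1}$ and ``first letter $<k$'' becomes ``first letter $\le k-1$'', and use the inversion-table (Lehmer code) bijection from $S_{m-1}$ to $\prod_{t=1}^{m-1}\{0,\dots,m-1-t\}$, $\sigma\mapsto(L_1,\dots,L_{m-1})$, under which $L_1=\sigma(1)-1$ and $L_1+\cdots+L_{m-1}=\inv(\sigma)$. Define the local bijection by subtracting $m-k$ from the first coordinate of the code and fixing the rest: it is patently reversible where defined, lowers the total from $i-k+1$ to $i-m+1$, and turns the bound $L_1\le m-2$ into $L_1-(m-k)\le k-2$, i.e. into ``first letter $\le k-1$'', with the inverse respecting all constraints symmetrically.

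The delicate point — and where the hypothesis $i>T_{m-2}$ is used — is that subtracting $m-k$ from $L_1$ must be legal for every source word, i.e. $L_1\ge m-k$ whenever $\inv(\sigma)=i-k+1$. Since $L_2+\cdots+L_{m-1}\le\sum_{t=2}^{m-1}(m-1-t)=T_{m-3}$, we have $L_1\ge(i-k+1)-T_{m-3}$; using $T_{m-2}=T_{m-3}+(m-2)$, the inequality $(i-k+1)-T_{m-3}\ge m-k$ is equivalent to $i\ge T_{m-2}+1$, exactly the hypothesis (the case $k=m$ is vacuous, and $k=1$ gives an empty source and empty target). I expect this threshold check to be the only genuinely delicate step; the rest is the inversion-count bookkeeping of the preceding paragraphs, which must be phrased carefully so the offsets $i-k+1$ and $i-m+1$ line up. Assembling everything gives the bijection $\phi$ with the stated corner property — and, incidentally, running the same stratification on cardinalities yields product formulas for $|S_i(\lambda)|$ and $|S_{i'}(\widetilde{\lambda})|$ — whence $|S_i(\lambda)|=|S_{i-m+1}(\widetilde{\lambda})|$ for all $i>T_{m-2}$.
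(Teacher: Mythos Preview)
Your proof is correct and takes a genuinely different route from the paper's.  Both arguments stratify by the value $k$ (the top entry of $\tau$, the corner entry of $\phi(\tau)$) and reduce to comparing sets of words on $\{1,\dots,m\}\setminus\{k\}$; the divergence is in how that comparison is made.  The paper proceeds by a cardinality argument: it expands both $|S_i^k(\lambda)|$ and $|S_{i-m+1}^k(\widetilde{\lambda})|$ as sums of terms $|S_j(1^{m-2})|$ (by stripping off one further entry) and then invokes the hypothesis $i>T_{m-2}$ to force exactly those terms to vanish that would otherwise distinguish the two sums; the existence of a $k$-preserving bijection is then asserted from the equalities $|S_i^k(\lambda)|=|S_{i-m+1}^k(\widetilde{\lambda})|$, but no explicit map is given.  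You instead build the bijection directly on Lehmer codes: the single operation $L_1\mapsto L_1-(m-k)$ simultaneously drops the inversion count from $i-k+1$ to $i-m+1$ and converts the vacuous bound $L_1\le m-2$ into the row-standardness constraint $L_1'\le k-2$, with the hypothesis $i>T_{m-2}$ entering as precisely the inequality needed to keep $L_1-(m-k)\ge 0$.  Your version is more constructive and actually delivers the ``moreover'' clause as an explicit map rather than an existence statement; the paper's version has the minor advantage of using only the most elementary recursion (delete the top entry) and no auxiliary encoding.
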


\begin{proof}
For each $1 \leq k \leq m$, denote the set of inverted tableaux in $S_i(\lambda)$ whose first entry is $k$ by $S_i^k(\lambda)$.  Similarly, denote the set of inverted tableaux in $S_i(\widetilde{\lambda})$ whose sole second column entry is $k$ by $S_i^k(\widetilde{\lambda})$.  To expedite notation, for the $p$-row column tableaux we refer to our $p \times 1$ shape as $(1,\hdots,1) = 1^p$, so that our original shape is $\lambda = 1^m$.

So fix any $1 \leq k \leq m$ and enforce our condition that $i > T_{m-2}$.  This is equivalent to saying that $i-m+2 > T_{m-3}$.  As the maximum number of inversions for a column tableau of size $(m-2) \times 1$ is $M_{m-2} = T_{m-3}$, it follows that $\vert S_{i-k+1}(1^{m-2}) \vert = \hdots = \vert S_{i-m+2}(1^{m-2}) \vert = 0$ no matter our choice of $k$.  This ensures:
\begin{equation}
\label{eq: tail-end lemma A}
\vert S_{i-k+1}(1^{m-2}) \vert + \vert S_{i-k}(1^{m-2}) \vert + \hdots + \vert S_{i-k-m+3}(1^{m-2}) \vert = \vert S_{i-m+1}(1^{m-2}) \vert + \hdots + \vert S_{i-k-m+3}(1^{m-2}) \vert
\end{equation}

Consider any inverted tableau $\tau \in S_{i-k+1}(1^{m-1})$, and assume that the top entry of $\tau$ is $a$.  $\tau$ contains precisely $a-1$ inversion pairs that involve $a$.  Removing $a$ and re-indexing the remaining entries of $\tau$ so that $a$ isn't skipped then produces an element of $S_{i-k+1-(a-1)}(1^{m-2})$.  Ranging over all possible top entries for elements of $S_{i-k+1}(1^{m-1})$ gives:
\begin{equation}
\label{eq: tail-end lemma B}
\vert S_{i-k+1}(1^{m-1}) \vert = \vert S_{i-k+1}(1^{m-2}) \vert + \vert S_{i-k}(1^{m-2}) \vert + \hdots + \vert S_{i-k+1-(m-2)}(1^{m-2}) \vert 
\end{equation}

Applying similar reasoning to $S_{i-m+1}(1^{m-1})$ then allows us to rewrite Equation \ref{eq: tail-end lemma A} as:
\begin{equation}
\label{eq: tail-end lemma C}
\vert S_{i-k+1}(1^{m-1}) \vert = \vert S_{i-m+1}(1^{m-1}) \vert - \vert S_{i-m-k+2}(1^{m-2}) \vert - \vert S_{i-m-k+1}(1^{m-2}) \vert - \hdots \vert S_{i-2m+3}(1^{m-2}) \vert
\end{equation}

Pause to consider $S_i^k(\lambda) = S_i^k(1^{m})$.  Similarly to above, any $\tau \in S_i^k(1^{m})$ has $k-1$ inversion pairs involving $k$, so that if we simply remove $k$ (and re-index so that we don't skip $k$) we have an element of $S_{i-k+1}(1^{m-1})$.  It follows that:
\begin{equation}
\label{eq: tail-end lemma D}
\vert S_i^k(1^{m}) \vert = \vert S_{i-k+1}(1^{m-1}) \vert
\end{equation}

Now consider $S_{i-m+1}^k(\widetilde{\lambda})$, and compare this set to $S_{i-m+1}(1^{m-1})$ by removing the sole entry in the second column (and then re-indexing so $k$ isn't skipped).  Note that removing this entry does not effect the number of inversions in the tableau.  In general, $\vert S_{i-m+1}(1^{m-1}) \vert \geq \vert S_{i-m+1}^k(\widetilde{\lambda}) \vert$, as simply adding $k$ to the right of the first entry in a element from $S_{i-m+1}(m-1)$ may result in a tableau that isn't row-standard.  Eliminating elements of $S_{i-m+1}(1^{m-1})$ that fail to be row-standard after inserting $k$ gives $\vert S_{i-m+1}^k(\widetilde{\lambda}) \vert = \vert S_{i-m+1}(1^{m-1}) \vert - \vert S_{i-m+1}^{k+1}(1^{m-1}) \vert - \hdots - \vert S_{i-m+1}^m(1^{m-1}) \vert$.  This gives:
\begin{equation}
\label{eq: tail-end lemma E}
\vert S_{i-m+1}^k(\widetilde{\lambda}) \vert = \vert S_{i-m+1}(1^{m-1}) \vert - \vert S_{i-m-k+2}(1^{m-2}) \vert - \vert S_{i-m-k+1}(1^{m-2}) \vert - \hdots - \vert S_{i-2m+3}(1^{m-2}) \vert
\end{equation}

Inserting Equations \ref{eq: tail-end lemma D} and \ref{eq: tail-end lemma E} into Equation \ref{eq: tail-end lemma C} yields:
\begin{equation}
\label{eq: tail-end lemma F}
\vert S_i^k(1^{m}) \vert = \vert S_{i-m+1}^k(\widetilde{\lambda}) \vert
\end{equation}

As Equation \ref{eq: tail-end lemma F} holds for all $1 \leq k \leq m$, it is possible to construct a bijection $\phi: S_i(\lambda) \rightarrow S_{i-m+1}(\widetilde{\lambda})$ that takes the top entry of each element of $S_i(\lambda)$ to the sole entry in the second column of an element from $S_{i-m+1}(\widetilde{\lambda})$
\end{proof}

Lemma \ref{thm: tail-end lemma} suggests that any bijection $\vert S_i(\lambda) \vert = \vert S_{i-m+1}(\widetilde{\lambda}) \vert$ requires the existence of a column in the $\lambda$-shaped tableau with greater than $T_{m-2}$ inversions.  Notice that this is in agreement with what Appendix \ref{sec: inversion tables} suggests about the tail-end in the $m=3$ row case: the condition that $i > n$ is precisely the number of inversions necessary to guarantee that an $i$-inverted tableau of shape $\lambda$ possesses at least one column with greater than $T_{m-2} = 1$ inversions.  For an $m$-row rectangular tableau, the number of inversions necessary to guarantee that every $i$-inverted tableau has a column with greater than $T_{m-2}$ inversions is $i > n T_{m-2}$.  These observations lead us to postulate the following:

\begin{conjecture}
\label{thm: enumeration, rectangular i big}
Let $n \geq 1$, $m \geq 2$, and take the m-row shapes $\lambda = (n,\hdots,n)$, $\widetilde{\lambda}(n+1,n,\hdots,n,n-1)$.  Then $\vert S_i(\lambda) \vert = \vert S_{i-m+1}(\widetilde{\lambda}) \vert$ for all $i > n T_{m-2}$, where $T_{m-2}$ is the triangular number.
\end{conjecture}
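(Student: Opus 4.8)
The natural strategy is to prove the statement by induction on the number of columns $n$, with the base case $n=1$ supplied by Lemma~\ref{thm: tail-end lemma}: when $n=1$ the stair-step shape $\widetilde{\lambda}$ degenerates to the hook $(2,1,\hdots,1)$ and the bound $i > nT_{m-2}$ becomes exactly the bound $i > T_{m-2}$ of that lemma. For the inductive step I would imitate the enumerative bookkeeping in the proof of Lemma~\ref{thm: tail-end lemma} rather than attempt an explicit bumping bijection in the style of Theorem~\ref{thm: enumeration, rectangular i=1} (the single-inversion move underlying that earlier bijection changes the inversion count by one, whereas here it must change by $m-1$, and no obvious local modification does that). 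Concretely: split $S_i(\lambda)$ into subsets $S_i^k(\lambda)$ indexed by the value $k$ of the entry in the bottom-right box of the $m\times n$ rectangle, and split $S_{i-m+1}(\widetilde{\lambda})$ into subsets $S_{i-m+1}^k(\widetilde{\lambda})$ indexed by the value $k$ of the unique entry of the new $(n+1)$-st column; it then suffices to prove $\vert S_i^k(\lambda)\vert = \vert S_{i-m+1}^k(\widetilde{\lambda})\vert$ for every $k$, just as the lemma reduces to Equation~\ref{eq: tail-end lemma F}.

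Deleting the distinguished entry from each side and re-indexing the remaining entries produces, in both cases, a tableau of the near-rectangular shape $\lambda^- = (n,\hdots,n,n-1)$ obtained by erasing the bottom-right box of $\lambda$. On the left this deletion lowers the inversion count by the number of entries of column $n$ that exceed $k$, giving a count of the form $\vert S_{i-d}(\lambda^-)\vert$; on the right the deletion preserves the inversion count but is reversible only when the topmost entry of column $n$ is smaller than $k$, so the count is $\vert S_{i-m+1}(\lambda^-)\vert$ minus a sum of $\vert S_\bullet(\lambda^-)\vert$-type correction terms recording the excluded row-standardness failures, exactly paralleling Equations~\ref{eq: tail-end lemma D} and~\ref{eq: tail-end lemma E}. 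Matching the two expressions reduces to an identity among the numbers $\vert S_\bullet(\lambda^-)\vert$, which I would attack by a further decomposition of $\lambda^-$-tableaux according to the contents of their (height $m-1$) last column, invoking the hypothesis $i > nT_{m-2}$ at the decisive point: by pigeonhole $i > nT_{m-2}$ forces at least one column of every $\tau \in S_i(\lambda)$ to carry more than $T_{m-2}$ inversions --- the exact analogue, one column at a time, of the $i > T_{m-2}$ hypothesis driving Lemma~\ref{thm: tail-end lemma} --- and this should be what makes the correction terms collapse. Since $\lambda^-$ is not rectangular one should either prove the statement in the slightly wider generality of all shapes $(n,\hdots,n,n-1)$, or show that within the relevant inversion range such a tableau is forced to split off its last column and hence reduces to an $m\times(n-1)$ rectangle already handled by the induction.

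The step I expect to be the main obstacle is making the ``delete the corner and re-index'' recursion precise in the presence of condition~(2) of the inversion definition. For a single column (the setting of Lemma~\ref{thm: tail-end lemma}) every inversion is internal to that column, but for a rectangular tableau the number of inversions inside a non-final column depends on the \emph{arrangement} of the column immediately to its right, so the decomposition is not a clean product even across a vertical split; moreover, unlike the $m=2$ case (where $T_{m-2}=0$), a column carrying more than $T_{m-2}$ inversions need not by itself force the tableau to split. Carrying out the plan therefore seems to require classifying the arrangements available to a column with more than $T_{m-2}$ inversions together with the constraints each imposes on its right neighbour, and proving that $nT_{m-2}$ --- rather than any smaller threshold --- is precisely where these constraints become strong enough to drive the bijection. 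The tables of Appendix~\ref{sec: inversion tables} indicate that the bound is sharp, so any valid argument must genuinely use the full strength of $i > nT_{m-2}$.
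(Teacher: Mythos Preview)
The statement you are attempting to prove is labeled a \emph{Conjecture} in the paper, and the paper does not supply a proof. What the paper does offer is a sketch of a hoped-for approach: a recursive bumping algorithm in the spirit of Theorem~\ref{thm: enumeration, rectangular i=1}, where the map $\phi_1: S_i(\lambda) \hookrightarrow S_{i-m+1}(\widetilde{\lambda})$ would begin in the rightmost column carrying more than $T_{m-2}$ inversions, reorder that column via the bijection of Lemma~\ref{thm: tail-end lemma}, and then bump a distinguished element rightward through subsequent columns while conserving inversion number. The authors identify the obstacle as the later bumps: inversions present to the right of the starting column make it difficult to preserve inversion count in a uniquely reversible way.

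Your proposal takes a genuinely different route. You explicitly reject the bumping bijection and instead propose induction on $n$ with enumerative bookkeeping modeled on the proof of Lemma~\ref{thm: tail-end lemma}: delete a corner entry, re-index, and match correction terms. This is a reasonable alternative strategy, and your identification of the base case and of the pigeonhole role of the bound $i > nT_{m-2}$ is accurate. However, your proposal is not a proof either --- as you yourself acknowledge in your final paragraph, the recursion is not clean because inversions in a non-final column depend on the arrangement of the column to its right (condition~(2) of the definition), and a column with more than $T_{m-2}$ inversions need not force a split when $m \geq 3$. That obstacle is real and is essentially the same phenomenon, viewed from a different angle, that blocks the paper's bumping approach: in both cases the difficulty is that inversion structure is not local to a single column. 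Neither your sketch nor the paper's resolves it, so the statement remains open.

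One small point: your choice to index $S_i^k(\lambda)$ by the \emph{bottom}-right entry does not obviously parallel Lemma~\ref{thm: tail-end lemma}, where the distinguished entry is the \emph{top} of the column (the one that becomes the second-column entry of the hook). Since the new box of $\widetilde{\lambda}$ sits in the first row, the top-right entry of the rectangle is the more natural candidate; you may want to revisit that choice if you pursue the strategy further.
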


It is the authors' belief that it may be possible to prove Conjecture \ref{thm: enumeration, rectangular i big} via a ``recursive bumping" algorithm superficially similar to the method of proof in Theorem \ref{thm: enumeration, rectangular i=1}.  In this modified algorithm, the $\phi_1: S_i(\lambda) \hookrightarrow S_{i-m+1}(\widetilde{\lambda}) $ map would begin in the rightmost column that contains greater than $n T_{m-2}$ inversions.  The first rightward ``bump" would entail a reordering of the initial column in a manner that is uniquely determine by the bijection of Lemma \ref{thm: tail-end lemma}.  Subsequent rightward ``bumps" would then be required the conserve inversion number in a manner that can be uniquely undone by some inverse algorithm $\phi_2 : S_{i-m+1}(\widetilde{\lambda}) \hookrightarrow S_i(\lambda)$.

The difficulty in proving Conjecture \ref{thm: enumeration, rectangular i big} derives from these later ``bumps".  In particular, if a tableau possesses additional inversions to the right of the starting column, those inversions make the preservation of inversion number very complicated when attempting to bump a distinguished element past their column.  Resolving the difficulties in this proof, as well as developing tractable methods for dealing with $\vert S_i(\lambda) \vert$ when $i$ is greater than $1$ but too small to be in the ``tail-end", are the most significant remaining questions about enumerating tableaux inversions.

\bibliographystyle{amsplain}


\begin{thebibliography}{1}

\bibitem{OEIS}
\emph{The {O}n-{L}ine {E}ncyclopedia of {I}nteger {S}equences
  (\url{http://oeis.org/})}.

\bibitem{BDR}
Paul Drube, \emph{$sl_3$ skein modules of surfaces}, in preparation.

\bibitem{Fresse}
Lucas Fresse, \emph{Betti numbers of {S}pringer fibers in type {$A$}}, J.
  Algebra \textbf{322} (2009), 2566--2579.

\bibitem{Fulton}
William Fulton, \emph{Young tableaux, with applications to representations
  theory and geometry}, Cambridge University Press, 1996.

\bibitem{Russell}
Heather~M. Russell, \emph{The {B}ar-{N}atan skein module of the solid torus and
  the homology of $(n,n)$ {S}pringer varieties}, Geometriae {D}edicata
  \textbf{142} (2009), 71--89.

\bibitem{Stanley2}
Richard Stanley, \emph{Enumerative combinatorics vol. 2}, Cambridge University
  Press, 1999.

\bibitem{Stanley1}
Richard Stanley, \emph{Enumerative combinatorics vol. 1}, Cambridge University Press,
  2011.

\end{thebibliography}

\newpage

\appendix
\section{Inversion Tables}
\label{sec: inversion tables}

In this section we summarize some results from a computer program written by the authors to generate inverted Young tableaux.  This is done by first generating all standard Young tableaux of a given shape.  Each of the columns of each tableau have their elements permuted, and if the resulting Young tableau is row-standard, the number of inversions are counted and aggregated across every generated standard Young tableaux.  We highlight some entries to draw attention to a potential bijection, given in Conjecture \ref{thm: enumeration, rectangular i big}.  In its current form, the program can handle Young tableaux of at most three rows.  A version of the program is available \href{https://sites.google.com/a/valpo.edu/beagley/research/YoungTableauxInversions.java}{here}\footnote{\url{https://sites.google.com/a/valpo.edu/beagley/research/YoungTableauxInversions.java}}, and questions about the program should be directed to the first author.

\vspace{.2in}

\begin{small}
\begin{center}
\begin{minipage}[t]{0.49\columnwidth}
\begin{tabular}{| >{$}c<{$} | >{$}c<{$} | >{$}c<{$} | >{$}c<{$} |}
\hline
& (2,2,2) & (3,2,1) & \\ \hline
m=0 & 5 & & \\ \hline
m=1 & \cellcolor{gray!50}16 & \cellcolor{gray!50}16 & m=0 \\ \hline
m=2 & 25 & & \\ \hline
m=3 & \cellcolor{gray!50}24 & \cellcolor{gray!50}24 & m=1 \\ \hline
m=4 & \cellcolor{gray!50}14 & \cellcolor{gray!50}14 & m=2 \\ \hline
m=5 & \cellcolor{gray!50}5 & \cellcolor{gray!50}5 & m=3 \\ \hline
m=6 & \cellcolor{gray!50}1 & \cellcolor{gray!50}1 & m=4 \\ \hline
\text{TOTAL} & 90 & 60 & \text{TOTAL} \\ \hline
\end{tabular}
\end{minipage}
\begin{minipage}[t]{0.49\columnwidth}
\begin{tabular}{| >{$}c<{$} | >{$}c<{$} | >{$}c<{$} | >{$}c<{$} |}
\hline
& (3,3,3) & (4,3,2) & \\ \hline
m=0 & 42 & & \\ \hline
m=1 & \cellcolor{gray!50}168 & \cellcolor{gray!50}168 & m=0 \\ \hline
m=2 & 330 & 366 & m=1 \\ \hline
m=3 & 414 & & \\ \hline
m=4 & \cellcolor{gray!50}357 & \cellcolor{gray!50}357 & m=2 \\ \hline
m=5 & \cellcolor{gray!50}222 & \cellcolor{gray!50}222 & m=3 \\ \hline
m=6 & \cellcolor{gray!50}103 & \cellcolor{gray!50}103 & m=4 \\ \hline
m=7 & \cellcolor{gray!50}35 & \cellcolor{gray!50}35 & m=5 \\ \hline
m=8 & \cellcolor{gray!50}8 & \cellcolor{gray!50}8 & m=6 \\ \hline
m=9 & \cellcolor{gray!50}1 & \cellcolor{gray!50}1 & m=7 \\ \hline
\text{TOTAL} & 1680 & 1260 & \text{TOTAL} \\ \hline
\end{tabular}
\end{minipage}

\vspace{.2in}

\noindent \begin{minipage}[t]{0.49\columnwidth}
\begin{tabular}{| >{$}c<{$} | >{$}c<{$} | >{$}c<{$} | >{$}c<{$} |}
\hline
& (4,4,4) & (5,4,3) & \\ \hline
m=0 & 462 & & \\ \hline
m=1 & \cellcolor{gray!50}2112 & \cellcolor{gray!50}2112 & m=0 \\ \hline
m=2 & 4785 & 5643 & m=1 \\ \hline
m=3 & 7051 & 7161 & m=2 \\ \hline
m=4 & 7436 & & \\ \hline
m=5 & \cellcolor{gray!50}5951 & \cellcolor{gray!50}5951 & m=3 \\ \hline
m=6 & \cellcolor{gray!50}3773 & \cellcolor{gray!50}3773 & m=4 \\ \hline
m=7 & \cellcolor{gray!50}1937 & \cellcolor{gray!50}1937 & m=5 \\ \hline
m=8 & \cellcolor{gray!50}803 & \cellcolor{gray!50}803 & m=6 \\ \hline
m=9 & \cellcolor{gray!50}263 & \cellcolor{gray!50}263 & m=7 \\ \hline
m=10 & \cellcolor{gray!50}65 & \cellcolor{gray!50}65 & m=8 \\ \hline
m=11 & \cellcolor{gray!50}11 & \cellcolor{gray!50}11 & m=9 \\ \hline
m=12 & \cellcolor{gray!50}1 & \cellcolor{gray!50}1 & m=10 \\ \hline
\text{TOTAL} & 34650 & 27720 & \text{TOTAL} \\ \hline
\end{tabular}
\end{minipage}
\begin{minipage}[t]{0.49\columnwidth}
\begin{tabular}{| >{$}c<{$} | >{$}c<{$} | >{$}c<{$} | >{$}c<{$} |}
\hline
& (5,5,5) & (6,5,4) & \\ \hline
m=0 & 6006 & & \\ \hline
m=1 & \cellcolor{gray!50}30030 & \cellcolor{gray!50}30030 & m=0 \\ \hline
m=2 & 75075 & 91520 & m=1 \\ \hline
m=3 & 123552 & 137137 & m=2 \\ \hline
m=4 & 148512 & 137163 & m=3 \\ \hline
m=5 & 138801 & & \\ \hline
m=6 & \cellcolor{gray!50}105495 & \cellcolor{gray!50}105495 & m=4 \\ \hline
m=7 & \cellcolor{gray!50}67158 & \cellcolor{gray!50}67158 & m=5 \\ \hline
m=8 & \cellcolor{gray!50}36297 & \cellcolor{gray!50}36297 & m=6 \\ \hline
m=9 & \cellcolor{gray!50}16667 & \cellcolor{gray!50}16667 & m=7 \\ \hline
m=10 & \cellcolor{gray!50}6448 & \cellcolor{gray!50}6448 & m=8 \\ \hline
m=11 & \cellcolor{gray!50}2065 & \cellcolor{gray!50}2065 & m=9 \\ \hline
m=12 & \cellcolor{gray!50}531 & \cellcolor{gray!50}531 & m=10 \\ \hline
m=13 & \cellcolor{gray!50}104 & \cellcolor{gray!50}104 & m=11 \\ \hline
m=14 & \cellcolor{gray!50}14 & \cellcolor{gray!50}14 & m=12 \\ \hline
m=15 & \cellcolor{gray!50}1 & \cellcolor{gray!50}1 & m=13 \\ \hline
\text{TOTAL} & 756756 & 630630 & \text{TOTAL} \\ \hline
\end{tabular}
\end{minipage}
\end{center}
\end{small}

\end{document}